\newtheorem{thm}{Theorem}[section]
\newtheorem{lem}[thm]{Lemma}
\newtheorem{prop}[thm]{Proposition}
\newtheorem{cor}[thm]{Corollary}
\newtheorem{conj}[thm]{Conjecture}
\newtheorem{defn}[thm]{Definition}
\newtheorem{rmk}[thm]{Remark}
\DeclareMathOperator{\Id}{Id}
\DeclareMathOperator{\PR}{PSL_2(\RR)}
\DeclareMathOperator{\PC}{PSL_2(\CC)}
\DeclareMathOperator{\ES}{\bf{E}}
\DeclareMathOperator{\HS}{\bf{H}}
\DeclareMathOperator{\PS}{\bf{P}}
\DeclareMathOperator{\CS}{\bf{C}}
\DeclareMathOperator{\Ax}{Axis}
\DeclareMathOperator{\SSS}{\mathcal{S}}
\DeclareMathOperator{\EE}{\mathcal{E}}
\DeclareMathOperator{\FF}{\mathcal{F}}
\DeclareMathOperator{\Aut}{Aut}
\newcommand{\NN}{\mathbb{N}}      %
\newcommand{\ZZ}{\mathbb{Z}}      
\newcommand{\RR}{\mathbb{R}}      
\newcommand{\CC}{\mathbb{C}}      
\newcommand{\DD}{\mathbb{D}}      
\newcommand{\HH}{\mathbb{H}}      
\begin{document}


\title{The Space of Geometric Limits of One-generator Closed Subgroups of $\PR$}

\author{Hyungryul Baik \& Lucien Clavier  \\ 
Department of Mathematics\\
Malott Hall, Cornell University \\
Ithaca, New York 14853-4201 USA}

\maketitle

\begin{abstract}
We give a complete description of the closure of the space of one-generator closed subgroups of $\PR$ for the Chabauty topology, by computing explicitly the matrices associated with elements of $\Aut(\DD)\cong \PR$, and finding quantities parametrizing the limit cases.
Along the way, we investigate under what conditions sequences of maps $\varphi_n:X\to Y$ transform convergent sequences of closed subsets of the domain $X$ into convergent sequences of closed subsets of the range $Y$. 
In particular, this allows us to compute certain geometric limits of $\PR$ only by looking at the Hausdorff limit of some closed subsets of $\CC$.
\end{abstract}

\tableofcontents
\listoffigures

\section{Introduction}
\label{intro}

In \cite{Cha1}, C. Chabauty generalized a result of Mahler about the relative compactness of some sets of lattices of $\RR^n$ to a large class of locally compact groups.

In comparison with the Chabauty topology on the space of all closed subgroups of a locally compact group $G$, the space of all closed subsets of $G$ equipped with the Hausdorff distance is tremendously wilder. For instance, the Chabauty topology of $\RR$ is a closed segment (see for instance Section \ref{ss22}).
Also, in the beautiful paper \cite{PourHubb}, the Chabauty topology of $\RR^2$ is shown to be $S^4$.
In contrast, the space of closed subsets of $\RR$ is the Hilbert cube (see \cite{West}).
For a general exposition to Chabauty topology, we highly recommend \cite{Harpe}. 

The use of the Chabauty topology in the study of Kleinian groups (called geometric limits in this context) is now classical; it has interesting applications in the theory of hyperbolic manifolds; see for instance Chapter 9 of \cite{Thurston} and Section 5.9 of \cite{Marden}.



The authors were motivated by the desire to understand the closure of the faithful discrete type-preserving $\PC$-representations of the fundamental group of the once-punctured torus.
Even if a lot is known about geometric limits in general, it is still a challenge to understand the global space of geometric limits of Kleinian groups as a topological space.
For reasons like the existence of infinite enrichments (see \cite{Hubb2}), it is tremendously difficult to understand that space as a whole.
However, it is possible to attack the problem of explicitly describing geometric limits in a few simpler cases, starting in this paper with the space of one-generator closed subgroups of $\PR$.
This rather simple case already presents some subtle issues arising from the special nature of the Chabauty topology (for definitions of terminologies, see Section 2). 
We will first show that the most natural way to parametrize the space of one-generator closed subgroups of $\PR$ is too naive to give a correct idea of its closure in the Chabauty topology, and we will give a new effective parametrization of this space.
This allows us to compute every possible geometric limit of convergent sequences of one-generator closed subgroups, by simply computing the limit of these parameters.
The main results can be found in Sections \ref{es}, \ref{hs} and \ref{glu}.

Here is now a summary of the paper.

Section 2. Given some locally-compact group $G$, the Chabauty topology on the space $\mathcal{C}(G)$ of all closed subgroups of $G$ is induced by the Hausdorff distance on closed subsets of the one-point compactification of $G$, regarded as a set.
We see, for instance, that $\mathcal{C}(\RR)$ is simply a closed segment.

Section 3. Any element of $\Aut(\HH^2)\cong \PR$ is either elliptic (hence a rotation around some point of $\HH^2$), hyperbolic (i.e. it fixes an axis and acts as a translation on it) or parabolic (it fixes only one point in~$\partial \HH^2$). 
As a result, a first ``naive'' picture of the space of all one-generator subgroups of $\PR$ is obtained by describing subgroups generated by an elliptic element (resp. hyperbolic) thanks to 
 its fixed point (resp. fixed axis) and its order as a rotation around that point (resp. its translation length on that axis). 
This is naive in the sense that the \emph{closure} $\ES$ (resp. $\HS$, $\PS$) of the space of subgroups generated by one single elliptic element (resp. hyperbolic, parabolic) is not the one we would expect from looking at the picture (compare Figures \ref{thisisright} and
\ref{foliations}).
Also it is not clear how $\PS$ is attached to both $\ES$ and $\HS$.

Section 4. We give matrix representations to all elements of $\Aut(\HH^2)$. These matrices take into account the parameters described above, but also other quantities ($\mu$ and $\nu$ in our notations)
that will play a fundamental role in Sections \ref{es}, \ref{hs} and \ref{glu}.

Section 5. For two given metric spaces $X$, $Y$ and a sequence of maps $\varphi_n:X\to Y$,
we investigate under what conditions convergent sequences of closed subsets $F_n$ of $X$ are automatically transformed into convergent sequences of closed subsets $\varphi_n(F_n)$ of $Y$.
See Proposition \ref{redlem}.

Section 6. Using Proposition \ref{redlem}, we can reduce the problem of computing the geometric limits of sequences of one-generator subgroups of $\PR$ to the problem of computing the Hausdroff limits of two families of sequences of closed sets of $\CC$.

Sections 7, 8 and 9. Collecting the informations obtained in Section 6, we draw the correct pictures of $\ES$, $\HS$ and $\PS$, and explain how to glue them together.

Section 10. We provide some ideas and work related to the present paper.

\textbf{Acknowledgements:}
We really appreciate that John H. Hubbard let us know about this problem and explained how we could approach it at the beginning. 
We also thank Bill Thurston for the helpful conversations. For the result in Section \ref{coolprop}, we thank James E. West and Iian Smythe for their encouraging and helpful comments.
The proof of Lemma \ref{oneptcompact} comes from a conversation with Juan Alonso.
We also thank the referee for providing constructive comments.

\pagebreak

\section{Preliminaries}
\label{sect:basics}

\subsection{The Chabauty topology}
\label{ss21}

 The Chabauty topology of a locally compact group is a topology on the space of all its closed subgroups. This topology can be understood via the Hausdorff distance. 
 
\begin{defn}
Let $(X,d)$ be a metric space. For every nonempty subsets $A, B$ of $X$, we define the Hausdorff distance between them as the following:
$$ d_H(A,B) = \max\{ \sup_{a \in A} \inf_{b \in B} d(a,b), \sup_{b \in B} \inf_{a \in A} d(a,b)\} $$
\end{defn} 

Note that $d_H(A,B) = 0$ if and only if the closures of $A$ and $B$ are the same. It is also well-known that $d_H$ defines a metric on the set of all compact subsets of $X$. 
It is compact with the topology induced by $d_H$, whenever $X$ is compact. 

Let $G$ be a locally compact group which is second-countable. 
It is then metrizable as a topological group, i.e. its topology is induced by some left-invariant metric.
$G$ being Hausdorff and locally compact, its one-point compactification $\overline{G}$ is Hausdorff.
Recall that $\overline{G}$ is obtained from $G$ by adding to it some infinity-point $P_\infty$ and declaring that the complements of compact subsets form a basis of neighborhood of $P_\infty$.
The following lemma implies that $\overline{G}$ is actually a metric space.

\begin{lem}
\label{oneptcompact}
Let $X$ be a second-countable, locally compact metric space. Then its one-point compactification $\overline{X}$ is metrizable. 
\end{lem}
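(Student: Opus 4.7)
The plan is to invoke the Urysohn metrization theorem in its classical form: a compact Hausdorff space is metrizable iff it is second-countable. Since $\overline{X}$ is compact by construction, and since the Hausdorff property of $\overline{X}$ is the standard consequence of $X$ being locally compact Hausdorff (we can separate $P_\infty$ from any $x \in X$ using a precompact neighborhood of $x$), the only nontrivial task is to exhibit a countable basis for $\overline{X}$.

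The first step is to show that $X$ is $\sigma$-compact. Fix a countable basis $\mathcal{B} = \{V_n\}_{n \in \NN}$ of $X$, and let $\mathcal{B}'$ be the subcollection of those $V_n$ whose closure in $X$ is compact; local compactness together with the fact that $\mathcal{B}$ is a basis guarantees that $\mathcal{B}'$ still covers $X$. From $\mathcal{B}'$ I would then build inductively an exhaustion $K_1 \subset K_2 \subset \cdots$ of $X$ by compact subsets with $K_n \subset \mathrm{int}(K_{n+1})$: start with $K_1 = \overline{V}$ for any $V \in \mathcal{B}'$, and given $K_n$, cover it by finitely many $V_{i_1}, \dots, V_{i_k} \in \mathcal{B}'$ (using compactness) and set $K_{n+1} = \overline{V_{i_1}} \cup \cdots \cup \overline{V_{i_k}}$.

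The second step is to use this exhaustion to produce a countable basis of $\overline{X}$. I claim that
\[
\mathcal{B}^* := \mathcal{B} \cup \bigl\{ \overline{X} \setminus K_n : n \in \NN \bigr\}
\]
works. Any open set of $\overline{X}$ not containing $P_\infty$ is open in $X$, hence is a union of elements of $\mathcal{B}$. Any open neighborhood of $P_\infty$ is, by definition, of the form $\overline{X} \setminus K$ for some compact $K \subset X$; since $K \subset \bigcup_n \mathrm{int}(K_n)$ and $K$ is compact, we have $K \subset K_n$ for some $n$, and therefore $\overline{X} \setminus K_n \subset \overline{X} \setminus K$, so $\mathcal{B}^*$ contains a neighborhood basis at $P_\infty$ too.

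With second-countability of $\overline{X}$ in hand, Urysohn's theorem delivers a metric that induces the topology of $\overline{X}$. I do not anticipate a real obstacle here; the only subtle point is checking that the complements $\overline{X} \setminus K_n$ actually exhaust the neighborhood filter of $P_\infty$, which is exactly what the nested exhaustion with $K_n \subset \mathrm{int}(K_{n+1})$ is designed to give us.
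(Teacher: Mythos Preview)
Your argument is correct and takes a genuinely different route from the paper. The paper constructs an explicit metric by embedding $\overline{X}$ into the dual of a tailored Banach space of Lipschitz functions on $X$ (sending $x\mapsto ev_x$ and $P_\infty\mapsto 0$, with a weighted norm forcing $\|ev_x\|\to 0$ as $x$ escapes the exhaustion). You instead verify that $\overline{X}$ is second-countable and invoke Urysohn. Your route is shorter and more classical; the paper's route has the advantage of producing a concrete metric, although the authors themselves note at the end of Subsection~\ref{ss22} that the explicit form of $d$ is never actually used.

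One small repair: as written, your inductive construction of the $K_n$ need not exhaust $X$ --- covering $K_n$ by finitely many members of $\mathcal{B}'$ does not force the union to grow without bound. The usual fix is to enumerate $\mathcal{B}'=\{W_m\}_{m\ge 1}$ and, at stage $n$, cover the compact set $K_n\cup\overline{W_n}$ (rather than just $K_n$) by finitely many elements of $\mathcal{B}'$ before taking closures; then $W_n\subset K_{n+1}$ for every $n$, so $\bigcup_n K_n\supset\bigcup_m W_m=X$, and you still retain $K_n\subset\mathrm{int}(K_{n+1})$, which is exactly what your second step needs.
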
 
\begin{proof}
 Let $K_n$ be an exhaustion of $X$ by compact subsets (i.e. $K_1 \subset K_2 \subset K_3 \subset \cdots$ is an ascending sequence of compact sets so that $\cup_n K_n = X$) . Consider $B'$ the space of Lipschitz functions on $X$ and consider the norm defined by
\[
\|f \| = \sup_{x \in X} |f(x)| + \sup_{x,y\in X} |f(x) - f(y)| + \sum_{n \ge 1} (2^n\sup_{x \in X \setminus K_n} |f(x)|)
\]
Then define $B = \{ f \in B' : \|f\| < \infty \}$. It is easy to see that $B$ is a Banach space. 

Let $B^{\ast}$ denote its dual. One can embed $\overline{X}$ into $B^{\ast}$ via the map $x \mapsto ev_x$ where $ev_x$ is the evaluation map $f \mapsto f(x)$. We map the infinty-point $P_\infty$ to $0$. The assumptions on $X$ guarantee that $ev_x$ and $ev_y$ are distinct if $x$ and $y$ are distinct points. On $B^{\ast}$, we have the norm 
\[
\|L \| = \sup\{|L(f)| : f \in B, \|f\| = 1\}
\]
In this way, we get an induced metric on $\overline{X}$. It is straightforward to check that the topology induced by this metric agrees with the standard topology on the one-point compactification.
 For example, one needs to show that $x$ is close to $P_\infty$ if and only if $ev_x$ is close to $0$. For $x$ close to $P_\infty$, $x \notin K_n$ for large $n$ and then $\| ev_x \| = \sup \{|f(x)| : f \in B, \|f\| = 1\} < 2^{-n}$. Hence $ev_x$ is close to $0$. For the converse, suppose $\|ev_x\| < \epsilon$ for some $0 < \epsilon < 2^{-N}$. If $x \in K_N$ but not in $K_{N+1}$, then we can have a bump function $f$ supported in some neighborhood of $x$ so that $\|f\| = 1$ but $|f(x)| = 2^{-N}$. Thus $x$ must be outside $K_N$. By taking smaller $\epsilon$, or equivalently taking larger $N$, we conclude that if $\|ev_x\|$ is small, $x$ should be outside most compact sets $K_n$. It remains to show that for $x, y \in X$, $x$ is close to $y$ if and only if $\|ev_x - ev_y\|$ is small. This is even easier than the case near $P_\infty$. The readers are invited to check the details.  
\end{proof} 

Let $F(G)$ be the set of all closed subgroups of $G$.
We simultaneously compactify every element of $F(G)$ by adding the infinity-point $P_{\infty}$ to every one of them.
Denote by $\overline{F}(G)$ the space obtained from $F(G)$ by this simultaneous one-point compactification, and set $\overline{A} = A \cup \{P_{\infty} \}$ for any $A \in F(G)$.
Note that since every closed subgroups of a locally compact group is locally compact, every $\overline{A}$ is Hausdorff.

Then $\overline{F}(G)$ is a compact metric space with the Hausdorff distance $d_H$. 
$\overline{F}(G)$, together with the distance $d_H$, is loosely refered to as the \textit{Chabauty topology} of $G$.

\emph{Notational Remark.} When a sequence of one-point compactified subgroups $\overline{A_n}$ converges to a subgroup $\overline{A}$, and when there is no possible confusion, we simply say that $A_n$ converges to $A$ \textit{in the Chabauty topology}.  

In the context of Kleinian groups, the limit of a convergent sequence in the Chabauty topology is called the \emph{geometric limit} of the sequence. For more details about the difference between the algebraic limit and the geometric limit, consult \cite{Hubb2}.

\subsection{The Chabauty topology of $\RR$}
\label{ss22}
The closed subgroups of $\RR$ are either $\RR$ itself, or generated by a real number. 
Let $G_r$ denote the group generated by $r$, so $G_r =r \ZZ= \{ \ldots, -2r, r, 0, r, 2r, \ldots \}$.
Since $G_r = G_{-r}$, we may always assume that $r \ge 0$.
Note that $G_0$ is the trivial group $\{0\}$.
We would like to study the space of these groups in the Chabauty topology.
As described in the previous section, we perform a simultaneous one-point compactification by adding $\infty$ to these subgroups of $\RR$.
By Lemma \ref{oneptcompact}, $\overline{\RR} = \RR \cup \{\infty\}$ is a metric space with some desired topology.
Let $d$ denote this metric.
The proof of the following lemma provides the way we should think about the Chabauty topology, and it plays an important role throughout the paper. 

\begin{lem}
\label{ChabR}
 $\overline{G_r}$ converges to $\overline{G_0} =  \{0, \infty\}$ as $r \to \infty$. 
\end{lem}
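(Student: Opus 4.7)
The plan is to unpack the Hausdorff distance $d_H(\overline{G_r},\overline{G_0})$ into its two one-sided suprema and show each tends to $0$ as $r\to\infty$. The crucial input is Lemma \ref{oneptcompact}, which guarantees a metric $d$ on $\overline{\RR}$ compatible with the one-point compactification topology. In this topology a basis of neighborhoods of $\infty$ is given by complements of compact subsets of $\RR$, so for any $\varepsilon>0$ there exists $M>0$ such that $d(x,\infty)<\varepsilon$ whenever $|x|\ge M$.

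First I would handle the easy direction. Since $0\in G_r$ for every $r$ and $\infty\in\overline{G_r}$ by construction, we have $\{0,\infty\}=\overline{G_0}\subseteq\overline{G_r}$, so
\[
\sup_{a\in\overline{G_0}}\;\inf_{b\in\overline{G_r}}d(a,b)=0.
\]

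Next I would handle the main direction, namely
\[
\sup_{b\in\overline{G_r}}\;\inf_{a\in\overline{G_0}}d(a,b)\;\longrightarrow\;0
\quad\text{as }r\to\infty.
\]
The set $\overline{G_r}$ consists of $0$, $\infty$, and the points $\pm kr$ for $k\ge 1$. For $b\in\{0,\infty\}$ the infimum is $0$. For $b=\pm kr$ with $k\ge 1$ we have $|b|\ge r$, so bounding the infimum by $d(b,\infty)$ and using the observation above, given $\varepsilon>0$ we can pick $r$ large enough that $d(\pm kr,\infty)<\varepsilon$ uniformly in $k\ge 1$. This shows the supremum is at most $\varepsilon$ for all sufficiently large $r$.

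Combining the two bounds gives $d_H(\overline{G_r},\overline{G_0})\to 0$, proving convergence in the Chabauty topology. The only delicate point is making sure that the ``distance to $\infty$'' behaves as expected in the abstract metric produced by Lemma \ref{oneptcompact}; but since that metric induces the one-point compactification topology, the equivalence ``$d(x_n,\infty)\to 0 \iff |x_n|\to\infty$'' is automatic, and no explicit knowledge of the metric is needed. I expect this is the only subtlety worth flagging in the write-up.
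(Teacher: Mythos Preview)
Your proof is correct and follows essentially the same approach as the paper's: both arguments pick an $\varepsilon$-ball around $\infty$ in $\overline{\RR}$, use that $G_r\setminus\{0\}$ lies in this ball once $r$ is large, and bound each half of the Hausdorff distance by $\varepsilon$. Your treatment of the first half via the containment $\overline{G_0}\subseteq\overline{G_r}$ (giving exactly $0$) is marginally cleaner than the paper's, and your explicit remark that no concrete formula for the metric $d$ is needed matches the paper's own comment following this lemma.
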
 
\begin{proof}
 Note that for any compact subset $K$ of $\RR$, there exists a $M >0 $ such that $G_r - \{0\} \subset K^c$ for all $r \ge M$.
Also, the complements of compact subsets form a basis of neighborhood of $\infty$.

 Let $\epsilon >0$ be arbitrary and $N$ be the $\epsilon$-ball around $\infty$ in $\overline{\RR}$. Let $M >0$ be large enough so that $G_r - \{0\} \subset N$ for all $ r \ge M$. 
For all such $r$, the Hausdorff distance between $\overline{G_r}$ and $\overline{G_0}$ is defined by 
\[
d_H(\overline{G_r},\overline{G_0}) = \max ( \sup_{x \in \overline{G_r} } d(x, \overline{G_0}), \sup_{y \in \overline{G_0}} d(y, \overline{G_r}))
\] 
First, look at $ \sup_{x \in \overline{G_r} } d(x, \overline{G_0})$. When $x = 0$, $d(x, \overline{G_0}) = 0$, and else $d(x,\infty)\leq d(x, \overline{G_0}) \leq \epsilon$ by the choice of $r$.
 The second term $ \sup_{y \in \overline{G_0}} d(y, \overline{G_r})$ is bounded above by $\epsilon$ for the same reason, so $d_H(\overline{G_r},\overline{G_0}) \leq \epsilon$.
Therefore, $\overline{G_r} \to \overline{G_0}$ in the Chabauty topology as $r \to \infty$. 
\end{proof} 

\begin{lem}
\label{ChabR2}
 $\overline{G_r}$ converges to $\overline{\RR}$ as $r \to 0$. 
\end{lem}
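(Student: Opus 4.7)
My plan is to bound the two one-sided suprema that appear in the Hausdorff distance $d_H(\overline{G_r},\overline{\RR})$ separately. Since $G_r \subset \RR$ and both spaces are compactified by the same point $\infty$, the inclusion $\overline{G_r} \subset \overline{\RR}$ is immediate and it kills one of the two suprema: $\sup_{x\in\overline{G_r}} d(x,\overline{\RR})=0$. So all the work goes into estimating $\sup_{y\in\overline{\RR}} d(y,\overline{G_r})$.

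The heart of the argument is that as $r\to 0$, the subgroup $r\ZZ$ becomes uniformly dense in $\RR$ for the standard metric (every real number is within Euclidean distance $r/2$ of some element of $G_r$). I need to transfer this standard-metric density into $d$-metric density on the compactification $\overline{\RR}$. Given $\epsilon>0$, the recipe is as follows. First, since the complements of compact subsets form a neighborhood basis of $\infty$, I pick a compact $K\subset \RR$ such that $\overline{\RR}\setminus K$ lies inside the $\epsilon$-ball around $\infty$ in $\overline{\RR}$. For a point $y\in\overline{\RR}\setminus K$, we then have $d(y,\infty)<\epsilon$, and since $\infty\in\overline{G_r}$ for every $r$, this already gives $d(y,\overline{G_r})<\epsilon$.

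Second, for points $y\in K$ I use the fact that the topology on $\overline{\RR}$ restricted to $\RR$ coincides with the standard topology on $\RR$ (by construction of the one-point compactification). Hence the two metrics are equivalent on any compact subset, and in particular on a slight enlargement $K'$ of $K$, uniform continuity of the identity map yields a $\delta>0$ such that standard-distance $<\delta$ in $K'$ implies $d$-distance $<\epsilon$. For every $r<\delta$, each $y\in K$ has an element $x\in G_r$ with $|y-x|<r/2<\delta$ in the standard metric; by taking $\delta$ smaller than the Euclidean distance from $K$ to $\RR\setminus K'$, both $y$ and $x$ lie in $K'$, so $d(y,x)<\epsilon$. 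Combining the two cases gives $\sup_{y\in\overline{\RR}} d(y,\overline{G_r})\le\epsilon$, and therefore $d_H(\overline{G_r},\overline{\RR})\le\epsilon$ whenever $r$ is small enough.

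The one place that requires care (and is the only real obstacle) is precisely this metric-comparison step: the metric $d$ on $\overline{\RR}$ supplied by Lemma \ref{oneptcompact} is not the Euclidean one, so Euclidean density of $G_r$ does not automatically translate into $d$-density. However, since $d$ induces the standard topology on $\RR$, the two metrics are equivalent on every compact set, and the uniform continuity argument above is enough to bridge the gap. No further structure of the particular metric constructed in Lemma \ref{oneptcompact} is needed.
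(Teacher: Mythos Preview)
Your proof is correct and is exactly the kind of elementary argument the paper has in mind: the paper's own ``proof'' of Lemma~\ref{ChabR2} consists solely of the remark that it can be done ``by using the same techniques as in Lemma~\ref{ChabR}.'' You follow precisely that template---bound each supremum in the definition of $d_H$, exploit that complements of compacts form a neighborhood basis of $\infty$, and use $\infty\in\overline{G_r}$---and you correctly isolate the one extra ingredient that is genuinely needed here but not in Lemma~\ref{ChabR}: transferring Euclidean density of $r\ZZ$ to $d$-density via uniform continuity of the identity on a compact enlargement $K'$ of $K$. That step is handled carefully (you make sure the approximating lattice point stays inside $K'$), and, as you note, it uses only that $d$ induces the standard topology on $\RR$, not any explicit formula for $d$.
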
 
\begin{proof}
This can be proved in an elementary way, by using the same techniques as in Lemma \ref{ChabR}. 
\end{proof}

As a simple corollary of Lemmas \ref{ChabR} and \ref{ChabR2}, the Chabauty topology of $\RR$ is isomorphic to the closed interval $[0,\infty]$.

Note that in those lemmas, we do not actually need to know explicitly the metric $d$. This fact will be also used in the proof of the Reduction Lemma (Proposition \ref{redlem}).

\subsection{Objects we are dealing with}

 We use the notations introduced in Subsection \ref{ss21}.
Let $\CS(\PR)$, or simply just $\CS$, be the closure in $\overline{F}(\PR)$ of the set of all one-point compactified \textit{cyclic} subgroups of $\PR$.
Our goal throughout this paper is to present a complete description of $\CS(\PR)$.

Note, after identification of $\PR$ and $\Aut(\HH^2)$, that each element of $\PR$ acts on $\HH^2$.
Let us recall that the isometries of $\HH^2$ are of three types:
elliptic if they have one fixed point inside of $\HH^2$,
hyperbolic if they have two fixed points in the boundary $S^1_\infty=\partial \HH^2$, and
parabolic if they have one fixed point in $S^1_\infty$.
It will sometimes be useful to consider the neutral element of $\PR$ to be of either of the three types above.

As we will see in the next section, most intuition can be gained from the careful observation of the action on $\HH^2$ of the generators of the cyclic closed subgroups of $\PR$.

\pagebreak

\section{Overview for $\CS(\PR)$} 
\label{overview}

\subsection{Elliptic generators } 
\label{elloverview}

We will first study the space of closed subgroups of $\PR$ with one elliptic generator.
Elliptic isometries of $\HH^2$ are rotations around a point in the interior of $\HH^2$. Let $\Omega$ be a subgroup of $\PR$ generated by one elliptic element $e$.
For $\Omega$ to be closed, $e$ needs to have finite order.
Also, note that $\Omega$ is uniquely determined by the choice of the center of the rotation $e$ and the order of $e$.
If we think of $\HH^2$ as the Poincar\'{e} disk $\DD$, then the space of choices for the center of the rotation can be identified with the unit open disk $D$. 
Thus we can express the space $\ES$ of the closed subgroups of $\PR$ with one elliptic generator by
$$ \bigsqcup_{n \ge 2} D_n$$
where the underlying set of $D_n$ is just the unit open disk $D$.
A point in $D_n$ represents the subgroup of all rotations of order $n$ around the corresponding point in $D$.
Of course, $\ES$ is an open subset of $\CS$, we would like to understand its boundary in $\CS$.

It is easy to prove (using a direct proof for instance) that if some moving elliptic generator stays in a finite number of $D_n$ (i.e. its order as a rotation is bounded) while its fixed point tends to a point in the boundary $\partial D$, then the subgroup it generates in $\PR$ tends to the trivial group $\{\Id\}$ for the Chabauty topology.
Thus, part of the closure in $\CS$ of the space of the closed subgroups of $\PR$ with one elliptic generator looks like a wedge sum of countably-many 2-spheres.

Also, it is easy to prove that if the order of the moving elliptic generator increases to infinity while its fixed point stays the same (or tends to some point in the interior of $D$), then the subgroup it generates in $\PR$ tends in the Chabauty topology to the group of all rotations around that point.
Thus, part of the wedge sum described above has to accumulate to some open disk $D_{\infty}$, where a point in $D_\infty$ represents the subgroup of all rotations around the corresponding point in $D$.

For now, it is not quite clear what is happening in the case where our moving generator tends to some point in $\partial D$ and its order tends to infinity. 
It is reasonable to think that the subgroup it generates will converge to some subgroup of parabolic elements, but it is not obvious at the moment what the picture really looks like.

\subsection{Hyperbolic generators} 
\label{hypoverview}

Hyperbolic isometries of $\HH^2$ have two fixed points on the circle at infinity $S^1_{\infty}$.
The geodesic in $\HH^2$ connecting the fixed points of an hyperbolic element $h$ of $\PR $ is called the axis of $h$, denoted $\Ax(h)$.
$h$ acts on $\Ax(h)$ as a translation.
Let us fix some axis. If we consider all subgroups of $\PR$ whose elements are hyperbolic elements sharing this axis, the situation is similar to the case of subgroups of $\RR$: 
we can parametrize these subgroups by the translation length on the axis.

\begin{defn} Let $h \in \PR$ be a hyperbolic isometry of $\HH^2$.
The translation length of $h$ is the distance $d(x, h(x))$ where $d$ is the Poincar\'{e} metric on $\HH^2$ and $x$ is an arbitrary element on $\Ax(h)$. 
\end{defn} 


\begin{lem} Let $(h_n)$ be a sequence of hyperbolic isometries sharing the same axis $l$. Then the following holds:
\label{ChabHyp}
\begin{itemize}
\item[1.] If the translation length of $h_n$ tends to infinity, the limit in the Chabauty topology of the subgroup that $h_n$ generates is the trivial group. 
\item[2.] If the translation length of $h_n$ tends to zero, the limit in the Chabauty topology of the subgroup that $h_n$ generates is the subgroup of $\PR$ of all hyperbolic elements sharing the axis $l$.
\end{itemize}
\end{lem}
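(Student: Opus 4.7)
The strategy is to reduce both claims to the analogous statements for $\RR$ already established in Lemmas \ref{ChabR} and \ref{ChabR2}. Let $T_l$ denote the subgroup of $\PR$ consisting of $\Id$ together with all hyperbolic isometries whose axis is $l$. Sending a hyperbolic isometry to its signed translation length along $l$ (and $\Id$ to $0$) is a group isomorphism $T_l \cong \RR$, and moreover a homeomorphism onto a closed subgroup of $\PR$. Under this identification, each $\langle h_n \rangle$ corresponds to a cyclic subgroup of $\RR$. Moreover, since $T_l$ is closed in $\PR$, escaping every compact subset of $\PR$ within $T_l$ is equivalent to having translation length tending to infinity, so convergence to $P_\infty$ in $\overline{\PR}$ is detected by this single quantity.

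For Part 1, I would imitate the proof of Lemma \ref{ChabR} directly. Given $\epsilon > 0$, let $N$ be the $\epsilon$-ball around $P_\infty$ in $\overline{\PR}$; its complement is a compact subset of $\PR$ whose intersection with $T_l$ corresponds, under the identification above, to a bounded interval of translation lengths, say $[-M,M]$. Once $n$ is large enough that the translation length $\ell(h_n)$ exceeds $M$, every nontrivial power $h_n^k$ has translation length at least $M$ and thus lies in $N$. The Hausdorff distance estimate from Lemma \ref{ChabR} then applies verbatim, yielding $d_H(\overline{\langle h_n \rangle}, \overline{\{\Id\}}) \leq \epsilon$.

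For Part 2, both inclusions in the proposed limit require checking. Every power $h_n^k$ lies in the closed subgroup $T_l$, so any Chabauty limit is contained in $\overline{T_l}$. Conversely, given $g \in T_l$ with translation length $L$ and $\epsilon > 0$, continuity of the parametrization $\RR \to T_l \subset \PR$ yields some $\delta > 0$ such that translation lengths within $\delta$ of $L$ map into the $\epsilon$-ball around $g$. Once $\ell(h_n) < \delta$, the integer $k = \lfloor L/\ell(h_n)\rfloor$ produces a power $h_n^k$ within $\epsilon$ of $g$, so $g$ belongs to the Chabauty limit. Combined with a near-$P_\infty$ analysis exactly as in Part 1 (powers with very large $|k|$ cluster near $P_\infty$ rather than escaping $T_l$), this delivers the full reverse inclusion.

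The only nontrivial ingredient, and the main potential obstacle, is the verification that $T_l$ is a closed topological subgroup of $\PR$ isomorphic to $\RR$ via translation length. Once that is in hand, no explicit metric on $\overline{\PR}$ is needed, and each part is essentially a word-for-word transcription of the corresponding $\RR$ statement. This structural fact will in any case be reproved or made transparent in Section~4, once the matrix representations of $\Aut(\HH^2)$ have been written down.
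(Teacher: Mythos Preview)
Your proposal is correct and follows essentially the same approach as the paper: the paper's own proof consists of a single sentence, ``See the proof of Lemma~\ref{ChabR},'' together with the remark that direction is irrelevant since $g$ and $g^{-1}$ generate the same group. Your write-up simply makes explicit the identification of $T_l$ with $\RR$ and spells out the details that the paper leaves to the reader.
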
 
\begin{proof}
See the proof of Lemma \ref{ChabR}.
Note that the actual ``direction'' of the translation is not important, since for any $g \in \PR$, $g$ and $g^{-1}$ generate the same group.
\end{proof}

Therefore, for each axis $l$, the space of subgroups of $\PR$ which contain only hyperbolic elements with axis $l$ is homeomorphic to a closed interval $[0,\infty]$ in the Chabauty topology, where $\infty$ is identified with the trivial group $\{\Id\} \in \PR $, $t\in (0,\infty)$ is identified with the subgroup of $\PR$ generated by an element with translation length $t$ and axis $l$, and $\infty$ is identified with the subgroup of all hyperbolic elements with axis $l$.

The choice of an axis is the same as the choice of two distinct points on the circle.
Thus the space of all those choices can be identified with 
\[
(S^1 \times S^1 - \Delta ) / (x,y) \sim (y,x) 
\]
where $\Delta $ is the diagonal $\Delta = \{ (x, x) \in S^1 \times S^1 \, | \, x \in S^1\}$.
The next figure shows how to see this space as an open M\"{o}bius band.
In order to give a planar representation of this M\"{o}bius band, we replaced the circles $S^1$ above by segments $[0,2\pi]$ in the obvious way.

\begin{figure}[ht]
\label{mobiusband}
\begin{center}
\includegraphics[scale=0.6]{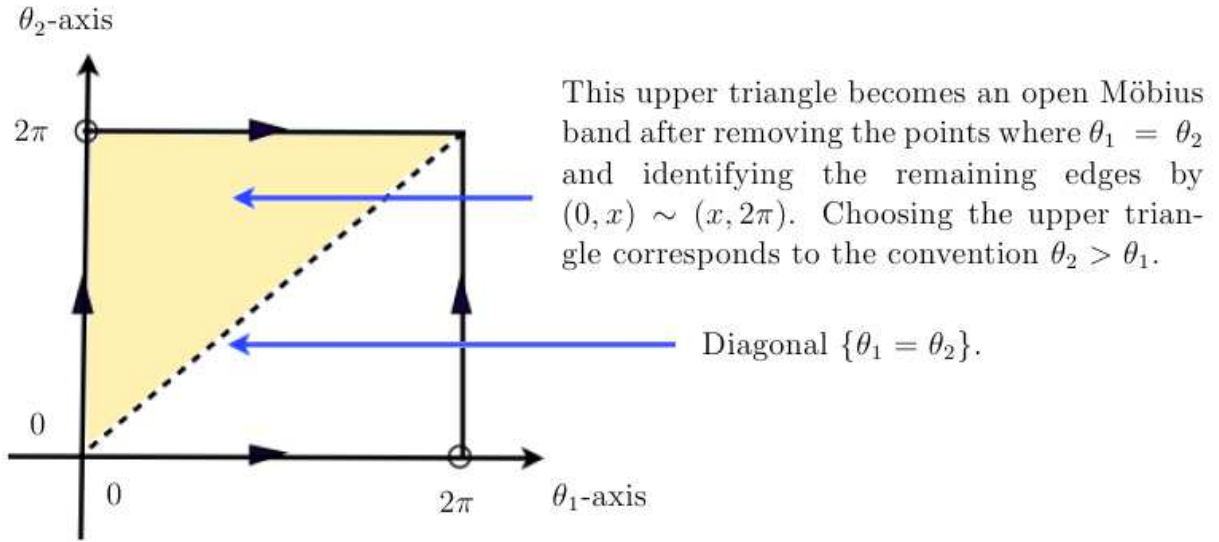}
\end{center}
\caption[The space of choices of axes for hyperbolic isometries]{The space of choices of an axis is an open M\"{o}bius band.}
\end{figure}

Therefore, the space of all cyclic subgroups generated by one hyperbolic element is a cone on the open M\"{o}bius band (see Figure \ref{thisisright}).
 We would like to understand its closure $\HS$ in $\CS$.

\begin{figure}[ht]
\begin{center}
\includegraphics[scale=0.5]{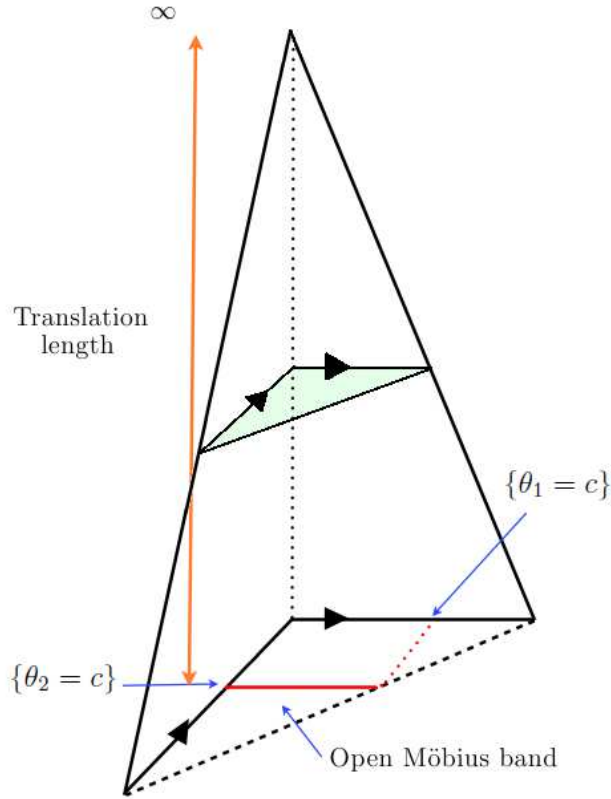}
\end{center}
\caption[The space of subgroups with one hyperbolic generator (naive version)]{The space of subgroups generated by one hyperbolic element is a cone on the open M\"{o}bius band. The left wall $\{\theta_1=0\}$ and the rear wall $\{ \theta_2=2\pi\}$ are identified following the black arrows. In pale green is a slice of constant translation length.}
\label{thisisright}
\end{figure}

As in the case of elliptic generators, it is possible to prove directly that if some hyperbolic generator moves within some horizontal slice (in pale green in the picture) and tends to the boundary of this slice, then the subgroup it generates tends to the trivial group in the Chabauty topology. 

Therefore, this picture, presenting a ``naive'' parametrization of the subgroups generated by one hyperbolic element, is rather deceiving, for the whole wall $\{\theta_1=\theta_2\}$ should be collapsed to a segment.
Also, it is not clear what happens when we approch the base of this wall. This will be settled in the following sections.

\subsection{Parabolic generators} 
\label{paraoverview}

Parabolic elements of $\PR$ have exactly one fixed point on the circle at infinity.
Thus the space of choices of the fixed points is simply $S^1$.
We want to parametrize all the parabolic elements which have the same common fixed point.
In the case of hyperbolic elements, there was a canonical way to express the amount of translation along their axes.
In the parabolic case, there is no such convenient parameters, and we will use in Subsection \ref{ParaMat} a less natural normalization, consisting on controlling the behaviour of more points than just the fixed point. 

Nevertheless, we will see that the space of all subgroups generated by one parabolic element sharing the same fixed point is the same as the Chabauty topology of $\RR$, namely $[0,\infty]$, where $\infty$ represents the trivial group, and 0 represents the subgroup of all parabolic elements around this fixed point.

Therefore, the closure $\PS$ of the space of all subgroups generated by one parabolic element is the cone on a circle, with top vertex representing the trivial group.

\subsection{How do $\ES$, $\HS$, $\PS$ fit together?}

Understanding the boundaries of $\ES$, $\HS$ and $\PS$, and showing how they fit together is the main goal of the rest of the paper.
We will show that both $\ES$ and $\HS$ accumulate to $\PS$ but not to each other.

In all the next sections (except Section \ref{coolprop} which deals with more general spaces than just $\PR$), we will analyze carefully the space of subgroups with one generator.
This will consist in a series of parallel arguments.
We will usually talk about the elliptic generator case first, since it is simpler and somewhat enlightening for the second case, namely the case of hyperbolic generators. The case of parabolic generators will always come last.

The first step in this analysis is to represent every elleptic, hyperbolic and parabolic elements as $2 \times 2$ matrices.


\pagebreak

\section{The key proposition } 
\label{coolprop}

Now we state the Reduction Lemma, which allows one to transform convergent sequences in one space to convergent sequences in a different space, for the Hausdorff topology.
This proposition is stated in a greater generality than we actually need in this paper, but we believe it is interesting in its own right. 
We do not claim that this result is new; nevertheless, we could not find it anywhere in the literature. 

\begin{prop}[Reduction Lemma]
\label{redlem}
Let $(X,d_X),\, (Y,d_Y)$ be two second countable, locally compact metric spaces. 
Let $( \varphi_n)$ be a sequence of maps from $X$ to $Y$, converging to a continuous proper map~$\varphi$, uniformly on every compact subset.
Assume that for every compact subset $K \subset Y$, the closed subset 
\[
\overline{\bigcup_{n\geq N}\varphi_n^{-1}(K)}
\]
is compact for $N$ large enough.

Then whenever a sequence of closed subsets $F_n\subset X$ converges to a closed subset $F$ in the Hausdorff topology of $X$, the subsets $\overline{\varphi_n(F_n)}$ converge to $\overline{\varphi(F)}$ in the Hausdorff topology of $Y$.
\end{prop}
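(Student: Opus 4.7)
My plan is to work in the one-point compactifications $\overline{X}$ and $\overline{Y}$, which are compact metrizable by Lemma \ref{oneptcompact}, and to verify the two Kuratowski criteria for Hausdorff convergence in a compact metric space: every accumulation point of a sequence $y_n \in \overline{\varphi_n(F_n)}$ must lie in $\overline{\varphi(F)}$, and every point of $\overline{\varphi(F)}$ must be the limit of some such sequence. Throughout, I think of the closures as taken in $\overline{Y}$, so that $P_\infty$ is always available as a fallback basepoint.

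For the upper semicontinuous direction, I take $y_n \to y$ with $y_n \in \overline{\varphi_n(F_n)}$. The case $y = P_\infty$ is immediate, so assume $y \in Y$. Pick $z_n \in \varphi_n(F_n)$ with $d_Y(y_n,z_n) \to 0$ and write $z_n = \varphi_n(x_n)$ for some $x_n \in F_n$. Fix a compact neighborhood $K$ of $y$ in $Y$; eventually $z_n \in K$, so the $x_n$ lie in $\overline{\bigcup_{n \ge N}\varphi_n^{-1}(K)}$, which is compact by hypothesis. Extract a subsequence $x_{n_k} \to x \in X$. The Hausdorff convergence $F_n \to F$ forces $x \in F$, and uniform convergence of $\varphi_n$ to $\varphi$ on the compact set $\{x\} \cup \{x_{n_k} : k \ge 1\}$ gives $\varphi_{n_k}(x_{n_k}) \to \varphi(x)$; hence $y = \varphi(x) \in \varphi(F) \subseteq \overline{\varphi(F)}$.

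For the lower semicontinuous direction, I take $y \in \overline{\varphi(F)}$. If $y = P_\infty$, simply set $y_n = P_\infty$. Otherwise, write $y = \lim_k \varphi(x^{(k)})$ with $x^{(k)} \in F$. For each fixed $k$, the Hausdorff convergence $F_n \to F$ supplies $x_n^{(k)} \in F_n$ with $x_n^{(k)} \to x^{(k)}$ in $X$ as $n \to \infty$; uniform convergence of $\varphi_n$ to $\varphi$ on a compact neighborhood of $x^{(k)}$ then yields $\varphi_n(x_n^{(k)}) \to \varphi(x^{(k)})$. A standard diagonal extraction produces a sequence $y_n \in \varphi_n(F_n)$ converging to $y$.

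The main obstacle is preventing ``escape to infinity'': without control on the preimages, a sequence $\varphi_n(x_n)$ could converge to some $y \in Y$ while $x_n \to P_\infty$ in $\overline{X}$, so $y$ would lie in no image $\varphi(F)$ and the upper semicontinuity would fail. The hypothesis that $\overline{\bigcup_{n \ge N}\varphi_n^{-1}(K)}$ is compact is designed precisely to forbid this, and properness of the limit map $\varphi$ ensures that $\varphi(F)$ itself is already closed in $Y$, which makes the target set $\overline{\varphi(F)}$ behave cleanly. Once these hypotheses are in force, everything else reduces to routine bookkeeping of subsequences and diagonal sequences.
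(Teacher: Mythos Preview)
Your argument is correct. Both you and the paper pass to the one-point compactifications $\overline{X},\overline{Y}$, but the execution differs. The paper isolates a single structural lemma: the preimage hypothesis is shown to be exactly the statement that the extensions $\widetilde{\varphi_n}:\overline{X}\to\overline{Y}$ (sending $\infty_X\mapsto\infty_Y$) converge \emph{uniformly on all of $\overline{X}$} to the continuous extension $\widetilde{\varphi}$; once this is in hand, the result follows from the trivial compact case (uniform convergence of maps between compact spaces preserves Hausdorff limits). You instead verify the two Kuratowski inclusions by hand, invoking the preimage hypothesis directly at the moment you need to trap the $x_n$ in a compact set. The paper's route is more modular and yields the bonus remark that the hypotheses are \emph{equivalent} to uniform convergence of the extensions; your route is more self-contained and makes the ``no escape to infinity'' mechanism visible at the exact step where it is used. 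Either way the content is the same.
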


\begin{proof}
 It is possible to prove this directly, using a so-called ``$\epsilon$/$\delta$'' argument.
 Since we would like to highlight the meaning of the condition that
$
\overline{\bigcup_{n\geq N}\varphi_n^{-1}(K)}
$
is compact for $N$ large enough, let us use a slightly different approach. 
First, notice that the case where $X$ is compact is more or less immediate.
\begin{lem}
\label{fact1}
 Let $(X,d_X),\, (Y,d_Y)$ be two metric spaces, $X$ compact, $Y$ locally compact. 
Let $(\varphi_n)$ be a sequence of maps from $X$ to $Y$, converging uniformly to a continuous map $\varphi$. 
Then whenever a sequence of closed subsets $F_n\subset X$ converges to a closed subset $F$ in the Hausdorff topology of $X$, the subsets $\overline{\varphi_n(F_n)}$ converge to $\overline{\varphi(F)}$ in the Hausdorff topology of~$Y$.
\end{lem}
\begin{proof}
This fact can be proved by a simple $\epsilon$/$\delta$ argument left to the reader. 
\end{proof}

Back to the proof of Proposition \ref{redlem}: if $Y$ is compact, $\varphi$ being proper implies that $X$ is compact. Thus, let us suppose that neither $X$ nor $Y$ are compact. 
We would like to reduce the problem to the previous case, where the spaces were compact.
Thus, consider $\widetilde{X}$ (resp. $\widetilde{Y}$) the one-point compactification of $X$ (resp. $Y$), and define 
\[
\widetilde{\varphi_n},\; \widetilde{\varphi} : \widetilde{X} \rightarrow \widetilde{Y}
\]
to be those extensions of $\varphi_n$, $\varphi$ that send the infinity point of $\widetilde{X}$ to the infinity point of $\widetilde{Y}$.

\begin{lem}
\label{fact2}
$\widetilde{\varphi_n}$ converges uniformly to $\widetilde{\varphi}$.
\end{lem}
\begin{proof}
The condition concerning the compactness of
$
\overline{\bigcup_{n\geq N}\varphi_n^{-1}(K)}
$
means exactly that for every neighborhood $\mathcal{N}_2$ of infinity in $\widetilde{Y}$, there is a little neighborhood $\mathcal{N}_1$ of infinity in $\widetilde{X}$ which is sent into $\mathcal{N}_2$ by all $\widetilde{\varphi_n}$, for $n$ larger than some integer $N$.

Indeed, using that complements of compact subsets form a basis of infinity, the latter statement can be rewritten:
\[ \begin{array}{c}
 \forall K_2 \subset Y \text{ compact, } \exists K_1 \subset X \text{ compact, } \exists N \in \NN \text{ s.t. } \forall n\geq N, \\
 \varphi_n(K_1^c) \subset K_2^c  
\end{array}\] 
but since $\varphi_n(K_1^c) \subset K_2^c $ is equivalent to $\varphi_n^{-1}(K_2) \subset K_1 $, it can be rewritten as 
\[ 
 \forall K_2 \subset Y \text{ compact, } \exists K_1 \subset X \text{ compact, } \exists N \in \NN \text{ s.t. } \bigcup_{n\geq N}\varphi_n^{-1}(K_2) \subset K_1 
\] 
i.e.
\[
  \forall K_2 \subset Y \text{ compact, } \exists N \in \NN \text{ s.t. } \overline{\bigcup_{n\geq N}\varphi_n^{-1}(K_2)} \text{ is compact}
\]

Now, for every $\epsilon>0$ we can choose $\mathcal{N}_2$ to be the ball of radius $\epsilon/2$ around the infinity point of $\widetilde{Y}$, $K_1$ and $N\in \NN$ like above.
Taking a larger $K_1$ if necessary, we can always assume that $\widetilde{\varphi}$ also sends $\mathcal{N}_1=K_1^c$ into $\mathcal{N}_2$.
But then, for all $n\geq N$ and all $x\in K_1^c$, 
\[
 d_{\widetilde{Y}}(\widetilde{\varphi_n}(x), \widetilde{\varphi}(x))\leq  d_{\widetilde{Y}}(\widetilde{\varphi_n}(x), \infty)+ d_{\widetilde{Y}}(\widetilde{\varphi}(x), \infty) \leq \epsilon  
\]

Additionally, since $\widetilde{\varphi_n}$ converges to $\widetilde{\varphi}$ uniformly on $K_1$, we can replace $N$ by a larger integer if necessary, and assume that for all $n\geq N$ and all $x\in K_1$, 
\[
 d_{\widetilde{Y}}(\widetilde{\varphi_n}(x), \widetilde{\varphi}(x))\leq  \epsilon  
\]

Therefore, $\widetilde{\varphi_n}$ converges uniformly to $\widetilde{\varphi}$.
\end{proof}

Back to the proof of Proposition \ref{redlem}, since $\widetilde{\varphi_n}$ converges uniformly to $\widetilde{\varphi}$ and $\widetilde{\varphi}$ is continuous (because $\varphi$ is continuous and proper), we are done by Lemma \ref{fact1}.
\end{proof}

\begin{rmk}
 We can actually show more than Lemma \ref{fact2}: in the case where both $X$ and $Y$ are non-compact, the hypotheses of Proposition \ref{redlem} are \underline{equivalent} to asking that $\widetilde{\varphi_n}$ converges uniformly to $\widetilde{\varphi}$ and that $\widetilde{\varphi}$ is continuous.
\end{rmk}

\begin{rmk}
\label{rmkomega}
The proof of Proposition \ref{redlem} actually shows the following: suppose the functions $\varphi_n$ are only defined on some domains $\Omega_n \subset X$ satisfying that for any compact subset $K\subset X$, we can find an integer $N$ such that for all $n\geq N$, $K\subset \Omega_n$. Equivalently, $\Omega_n^c\subset \mathcal{N}$ for every neighborhood $\mathcal{N}$ of the infinity-point $\infty\in \overline{X}$ and for all $n$ large enough. Then the conclusion of Proposition \ref{redlem} still holds \textit{if $F_n\subset \Omega_n$ for every $n$}, simply by declaring that $\widetilde{\varphi_n}$ sends every point of $\Omega_n$ to $\infty\in \widetilde{Y}$.
\end{rmk}

We would like to finish this section with some (counter) examples.

In Proposition \ref{redlem}, it is not necessary for the $\varphi_n$ to be proper (nor continuous, actually), as we can see by defining $X=Y=\RR$ and
\[
\varphi_n(x)= 	\begin{cases}
			-n \text{ if } x\leq -n \\
  			x \text{ if } |x|\leq n\\
			n \text{ otherwise }
 		\end{cases}
\]
which are bounded, but converge uniformly on every compact subset to the identity map, and Proposition \ref{redlem} still applies.

Emphatically, it is not sufficient either that the $\varphi_n$ be proper and continuous, as we see by defining $X=Y=\RR$ and
\[
\varphi_n(x)= 	\begin{cases}
  			x \text{ if } x\leq n\\
			\frac{3n-x}{2} \text{ otherwise }
 		\end{cases}
\]
which are continuous, proper and converge on every compact subset to the identity map~$\varphi$.
But, choosing $F_n=F=\ZZ$, we have $1/2\in \varphi_n(F_n)$ for each $n$ (take $x=3n-1$), but $1/2\notin \varphi(F)=\ZZ$.

In addition, $\varphi_n$ being proper does not imply that $\varphi$ is: define $X=Y=\RR$ and
\[
\varphi_n(x)= 	\begin{cases}
  			x+n \text{ if } x\leq -n\\
			x-n \text{ if } x\geq n\\
			0 \text{ otherwise }
 		\end{cases}
\]
which are continuous and proper, but converge uniformly on every compact subset to the zero map.

\pagebreak

\section{Matrix representations} 
\label{matrix}

In this section, we show how to represent every elliptic, hyperbolic and parabolic element of $\PR$ as a $2 \times 2$ matrix.

\begin{rmk}
It will usually be more convenient to use the Poincar\'{e} disk model $\DD$ instead of the upper half-plane model $\HH^2$, for symmetry reasons. 
As a result, the matrices we are interested in will have complex entries, the reader should not be surprised by this.
See for instance \cite{Hubb} for the standard identification of $\PR$ and $\Aut(\DD)$.
\end{rmk}



\subsection{The elliptic case} 

Let $E_{p, \phi}$ denote the elliptic element which is a rotation around $p \in \DD$ with angle $\phi$.
We use the polar coordinates $p = r e^{i \theta}$ for elements of $\DD$.
We have the following lemma. 

\begin{lem} 
\label{ellmatrix} 
The element in $\Aut(\DD)$ that corresponds to the M\"{o}bius transformation $E_{p, \phi}$ is represented by the matrix 
$$ e^{- i \phi / 2} \begin{pmatrix} 1 - \mu & p \mu \\ -\overline{p} \mu & 1 + p\overline{p} \mu \end{pmatrix} $$ 
where $\mu = \frac{1 - e^{i \phi}}{1- |p|^2}$.
\end{lem}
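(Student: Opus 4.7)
The plan is to realize $E_{p,\phi}$ as a M\"obius transformation of $\DD$ by an explicit conjugation trick, read off its $2\times 2$ matrix, and then pin down the scalar prefactor so as to land in $SU(1,1)$ (which is the standard cover of $\Aut(\DD) \cong \PR$). The key observation is that the disk automorphism $f_p(z) = \frac{z-p}{1-\bar p z}$ sends $p$ to $0$, so that rotation around $p$ by angle $\phi$ can be factored as
\[
E_{p,\phi} \;=\; f_p^{-1} \circ R_\phi \circ f_p,\qquad R_\phi(z)=e^{i\phi}z,
\]
with $f_p^{-1}(w) = (w+p)/(1+\bar p w)$.

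Next I would carry out this composition and clear denominators. Multiplying numerator and denominator by $(1-\bar p z)$ yields the one-line expression
\[
E_{p,\phi}(z) \;=\; \frac{(e^{i\phi}-|p|^2)\,z \;+\; p(1-e^{i\phi})}{-\bar p(1-e^{i\phi})\,z \;+\; (1-|p|^2 e^{i\phi})}.
\]
The defining identity $1-e^{i\phi}=\mu(1-|p|^2)$ then factors $(1-|p|^2)$ cleanly out of both numerator and denominator, leaving the equivalent presentation
\[
E_{p,\phi}(z) \;=\; \frac{(1-\mu)\,z + p\mu}{-\bar p\mu\, z + (1 + p\bar p\,\mu)},
\]
from which the matrix $\bigl(\begin{smallmatrix} 1-\mu & p\mu \\ -\bar p\mu & 1+p\bar p\mu \end{smallmatrix}\bigr)$ can be read off directly, up to a scalar.

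The last step is to determine the correct scalar prefactor so that the normalized matrix has determinant $1$. A direct expansion gives
\[
(1-\mu)(1+|p|^2\mu) + |p|^2\mu^2 \;=\; 1 - \mu(1-|p|^2) \;=\; e^{i\phi},
\]
so multiplying by $e^{-i\phi/2}$ normalizes the determinant to $1$. I would then do a quick sanity check that after this normalization the lower-right entry is indeed the complex conjugate of the upper-left entry (and likewise the off-diagonal), confirming that the result lies in $SU(1,1)$ as required of an element of $\Aut(\DD)$.

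The main obstacle, if there is any, is pure bookkeeping: conjugates and signs in the composition step are easy to flub, and the introduction of $\mu$ is precisely the auxiliary quantity that absorbs the $(1-|p|^2)$ and $(1-e^{i\phi})$ factors so that the final matrix takes the tidy form in the statement. There is no conceptual difficulty beyond the standard identification $\PR\cong\Aut(\DD)$ and the fact that every disk automorphism factors through a rotation at the origin by pre- and post-composing with $f_p$.
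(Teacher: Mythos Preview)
Your proof is correct and follows essentially the same approach as the paper: both conjugate the rotation $R_\phi$ at the origin by the disk automorphism $f_p(z)=(z-p)/(1-\bar p z)$, read off the resulting matrix, and normalize by the square root of the determinant. You simply spell out the intermediate step of factoring $(1-|p|^2)$ via the definition of $\mu$, whereas the paper leaves that algebra implicit.
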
 
\begin{proof}
$f := [z \mapsto \frac{z - p}{1 - \overline{p}z}]$ is an automorphism of  $\DD$ which maps $p$ to $0$. If $R_{\phi}$ denotes the rotation around $0$ by $\phi$ in $\DD$, then $E_{p, \phi} = f^{-1} \circ R_{\phi} \circ f$, which is represented by the matrix $$\begin{pmatrix} e^{i \phi} - p\overline{p} & p(1-e^{i \phi}) \\ - \overline{p}(1-e^{i \phi}) & 1 - p \overline{p} e^{i \phi} \end{pmatrix}$$ Its determinant is $e^{i \phi} ( 1 - p\overline{p})^2$, so the result follows. 
\end{proof} 

\begin{rmk} 
Let $<E_{p,\phi}>$ denote the subgroup generated by $E_{p,\phi}$. When $\phi$ is an irrational angle, $<E_{p,\phi}>$ is not closed, so we can ignore this case.
From now on, we will assume $\phi$ has finite order.
Also, we can always replace $E_{p,\phi}$ by $E_{p, 2\pi / |\phi|}$, where $|\phi|$ is the order of $\phi$, since they both generate the same group.
 This observation will be useful later. 
\end{rmk}

\subsection{The hyperbolic case} 

Choosing a hyperbolic element amounts to choosing an unordered pair of distinct elements in $S^1 = \partial \DD$ and a translation length. 
 
$\bf{Convention}$  When we choose an unordered pair of distinct elements $e^{i \theta_1}$, $e^{i \theta_2}$ in $S^1$, we always pick the labeling $\theta_1\neq\theta_2\in [0,2\pi]$ so that $\Delta := \theta_2 - \theta_1 \in (0, 2 \pi)$.

For $\theta_1$, $\theta_2$ as explained above, and for $a>1$,
let $H_{\theta_1, \theta_2, a}$ denote the hyperbolic element with translation length $\log a$ whose repelling and attracting fixed points are respectively $e^{i \theta_1}$ and $e^{i \theta_2}$.

\begin{rmk}
Since $H_{\theta_1, \theta_2, a}$ and its inverse generate the same group, it is sufficient to consider that $a>1$. 
\end{rmk}

We have the following lemma as an analogue of Lemma \ref{ellmatrix}. 
\begin{lem} 
\label{hypmatrix}
The element in $\Aut(\DD)$ that corresponds to the M\"{o}bius transformation $H_{\theta_1, \theta_2, a}$ is represented by the matrix
$$ \frac{1}{\sqrt{a}}  \begin{pmatrix}1 - \nu e^{i\Delta}  & \nu e^{i \theta_2} \\ - \nu e^{-i \theta_1} & 1 + \nu \end{pmatrix} ,$$ 
where $\nu = \frac{a-1}{1-e^{i\Delta}}$. 
\end{lem}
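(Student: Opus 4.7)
My plan is to mimic the proof of Lemma \ref{ellmatrix}. There, the elliptic rotation around $p$ was conjugated by the disk automorphism $z\mapsto (z-p)/(1-\overline{p}z)$, which sends $p$ to $0$, reducing the problem to the standard rotation $R_\phi$ around the origin. Here I want to conjugate $H_{\theta_1,\theta_2,a}$ by a Möbius transformation that sends the repelling fixed point $p_1=e^{i\theta_1}$ to $0$ and the attracting fixed point $p_2=e^{i\theta_2}$ to $\infty$. The conjugate is then a pure dilation of $\widehat\CC$, whose matrix is diagonal and immediate; the final formula should appear upon conjugating back.

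Concretely, I would set
\[
f(z)=\frac{z-p_1}{z-p_2}, \qquad \text{with matrix } \begin{pmatrix}1 & -p_1\\ 1 & -p_2\end{pmatrix},
\]
and write $H_{\theta_1,\theta_2,a}=f^{-1}\circ M_a\circ f$ where $M_a(w)=aw$ has matrix $\mathrm{diag}(a,1)$. The correct multiplier is $a$ (not $1/a$) because, with the convention of Subsection 4.2, $p_1$ is repelling and $a>1$: indeed, after transport to the upper half-plane, the hyperbolic translation of length $\log a$ with repelling point $0$ and attracting point $\infty$ is exactly $w\mapsto aw$.

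The matrix product $f^{-1}M_af$, taken in $\PC$ (so that the scalar factor $1/(p_1-p_2)$ coming from $f^{-1}$ can be ignored), evaluates to
\[
\begin{pmatrix} p_1-ap_2 & p_1p_2(a-1)\\ 1-a & ap_1-p_2 \end{pmatrix},
\]
whose determinant is $a(p_1-p_2)^2$. Dividing this matrix by $p_1-p_2$ produces a representative of determinant $a$, and the overall prefactor $1/\sqrt{a}$ in the statement is what places the matrix in $\mathrm{SL}_2$. The remaining work is to recognize each entry in the stated form using $p_1=e^{i\theta_1}$, $p_2=e^{i\theta_2}$, $e^{i\Delta}=p_2/p_1$, and $\nu=(a-1)/(1-e^{i\Delta})$. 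For instance, for the top-left entry:
\[
\frac{p_1-ap_2}{p_1-p_2}=1-\frac{(a-1)p_2}{p_1-p_2}=1-\frac{(a-1)e^{i\Delta}}{1-e^{i\Delta}}=1-\nu e^{i\Delta},
\]
and the other three entries fall out of the same kind of manipulation (factoring $p_1$ out of a denominator and using the definition of $\nu$).

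There is no real analytic obstacle; the proof is a direct verification. The one point that requires care—more bookkeeping than difficulty—is the sign/direction convention: one must check that the conjugate of $H_{\theta_1,\theta_2,a}$ by our chosen $f$ is $M_a$ and not $M_{1/a}$, so that the normalizing scalar is $1/\sqrt{a}$ rather than $\sqrt{a}$. Once this is confirmed, only routine algebra remains.
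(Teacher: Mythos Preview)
Your proposal is correct and follows essentially the same route as the paper: conjugate $H_{\theta_1,\theta_2,a}$ to the dilation $w\mapsto aw$ via a M\"obius map sending $e^{i\theta_1}\mapsto 0$, $e^{i\theta_2}\mapsto\infty$, multiply the three matrices, and normalize. The only cosmetic difference is that the paper uses $\phi(z)=-e^{i\Delta/2}\dfrac{z-e^{i\theta_1}}{z-e^{i\theta_2}}$, i.e.\ your $f$ post-composed with a scaling so that $\phi$ is a genuine isometry $\DD\to\HH$ (which makes the identification of the conjugate with $w\mapsto aw$ immediate from the translation-length definition); since that scaling commutes with $w\mapsto aw$, the resulting matrix is the same, and your check of the repelling/attracting convention covers the one point this shortcut might have obscured.
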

\begin{proof} 
$\phi = [z\mapsto -e^{i (\theta_2 - \theta_1)/2} \dfrac{z - e^{i \theta_1}}{z - e^{i \theta_2}}]$ is the isometry between $\DD$ and $\HH$ which maps $e^{i \theta_1}$ to 0, $e^{i \theta_2}$ to $\infty$ and $(\theta_1+\theta_2)/2$ to 1.
Then on $\HH^2$, the hyperbolic element with translation length $\log a$ and repelling, attracting fixed points $0$ and $ \infty$ is simply $[z \mapsto a z]$.
Thus we have $H_{\theta_1, \theta_2, a} = \phi^{-1} \circ [z \mapsto a z] \circ \phi$.
The result follows by direct computation. 
\end{proof}

\begin{rmk}
As we will see in Section \ref{limchab}, the parameters $\mu$ and $\nu$ (or rather their modulus) are in fact fundamental, since they express \textit{in a quantative way} how sequences of subgroups generated by one elliptic (resp. hyperbolic) element can converge to subgroups generated by one parabolic element. This is the ingredient we needed to understand clearly the boundary of $\CS$. See Sections \ref{limchab}, \ref{es}, \ref{hs} and \ref{glu}.
\end{rmk}
 

\subsection{The parabolic case} 
\label{ParaMat}
 In the case of parabolic isometries of $\HH^2$, there is no well-defined notion of translation length.
Indeed, all parabolic element in $\Aut(\HH^2)$ fixing $\infty$ have matrix representations of the form $\begin{pmatrix} 1 & b \\ 0 & 1\end{pmatrix}$. But these are all conjugate by dilatations $\begin{pmatrix} a & 0 \\ 0 & 1/a\end{pmatrix}$.
Thus to parametrize each group consistently, we need a normalization which we describe now. 
 
$\bf{Normalization}$ 
As explained above, we  want to express every parabolic isometry of the Poincar\'{e} disk $\DD$ as a translation $[ z \mapsto z + b]$ in $\HH^2$.
To do this in a consistent way, we are going to ask the parabolic element of $\Aut(\DD)$ to be conjugate with the translation $[ z \mapsto z + b]$ via a map
$f : \DD \rightarrow \HH^2$ sending $-e^{i \theta}$ to 0, 0 to $i$, and $e^{i \theta} $ to $\infty$.
Then we see that $b$ is uniquely defined, and that $f$ is simply the map
\[
f(z) = -i \dfrac{z + e^{i \theta}}{z - e^{i \theta}} 
\]
Then the following holds: 

\begin{lem}
 Define $P_{\theta, \rho}$ to be the matrix 
\[
\begin{pmatrix} 1 - i \rho & i \rho e^{i\theta} \\ -i \rho e^{-i \theta} & 1 + i \rho \end{pmatrix} 
\]
Then $P_{\theta, \rho}$ represents the translation $[z \mapsto z - 2 \rho]$ in $\HH^2$ under the above normalization. 
\end{lem}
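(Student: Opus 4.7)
The plan is to reduce the statement to a single matrix conjugation computation. Under the chosen normalization, saying that $P_{\theta,\rho}$ represents the translation $T_{-2\rho}: z \mapsto z - 2\rho$ in $\HH^2$ means that $P_{\theta,\rho}$ should equal, up to a nonzero scalar, the matrix of the Möbius transformation $f^{-1} \circ T_{-2\rho} \circ f$. Since matrix multiplication realizes composition of Möbius transformations (modulo scalars), the entire verification boils down to computing a single product of three explicit $2\times 2$ matrices and matching the result with the claimed formula.

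First I would record the matrices of the three pieces. The map $f(z) = -i\frac{z+e^{i\theta}}{z-e^{i\theta}}$ has matrix $M_f = \begin{pmatrix} -i & -ie^{i\theta} \\ 1 & -e^{i\theta} \end{pmatrix}$, with determinant $2ie^{i\theta}\neq 0$, so $f$ is a genuine Möbius transformation and its inverse is proportional to $\begin{pmatrix} -e^{i\theta} & ie^{i\theta} \\ -1 & -i \end{pmatrix}$. The translation $T_{-2\rho}$ corresponds to $M_T = \begin{pmatrix} 1 & -2\rho \\ 0 & 1 \end{pmatrix}$.

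Next I would carry out the two successive multiplications $M_f^{-1}\cdot M_T \cdot M_f$, dropping scalar multiples freely throughout since we are working in $\PR$. After the first product and the subsequent one, a common factor $2e^{i\theta}$ can be pulled out, leaving the matrix $\begin{pmatrix} i+\rho & -\rho e^{i\theta} \\ \rho e^{-i\theta} & i-\rho \end{pmatrix}$. Multiplying this by the scalar $-i$ rewrites the four entries as $1-i\rho$, $i\rho e^{i\theta}$, $-i\rho e^{-i\theta}$, and $1+i\rho$ respectively, which is exactly the matrix $P_{\theta,\rho}$ appearing in the statement. This completes the verification.

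The only difficulty is bookkeeping: keeping track of the signs, of the factor $-2\rho$, and of the various powers of $e^{\pm i\theta}$ across two consecutive $2\times 2$ multiplications. There is no conceptual obstacle, and the sanity check that the conjugate in $\DD$ fixes $e^{i\theta}$ (the preimage of $\infty$ under $f$, hence the unique fixed point of any parabolic of the form $f^{-1}\circ T_b\circ f$) is an immediate consequence of the construction and can serve as a partial verification at the end of the computation.
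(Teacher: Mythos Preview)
Your proof is correct and takes essentially the same approach as the paper: both verify that $P_{\theta,\rho}$ agrees, up to a scalar, with the matrix product representing $f^{-1}\circ[z\mapsto z-2\rho]\circ f$, using exactly the same matrices for $f$, $f^{-1}$, and the translation. You simply carry out the multiplication more explicitly than the paper, which just asserts that the product and $P_{\theta,\rho}$ differ by a scalar.
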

\begin{proof}
It is straightforward to check that $P_{\theta, \rho}$ and 
$$\begin{pmatrix} -e^{i \theta} & i e^{i \theta} \\ -1 & -i \end{pmatrix} \begin{pmatrix} 1 & -2\rho \\ 0 & 1 \end{pmatrix} \begin{pmatrix} -i & -i e^{i \theta} \\ 1 & -e^{i \theta}  \end{pmatrix}$$
differ from a scalar. 
Therefore the matrix $P_{\theta, \rho}$ represents the transformation $f^{-1} \circ [z \mapsto z - 2\rho] \circ f$ of $\DD$.
\end{proof} 

Using this normalization, we can unambiguously speak about the ``translation distance'' of a given parabolic element of $\PR$.

\pagebreak

\section{Limits in the Chabauty topology }
\label{limchab}

Here we show how to use the matrices obtained in Section \ref{matrix} to deduce the possible limits of subgroups in the Chabauty topology.
We start with the elliptic case. 

\subsection{The elliptic case } 

We consider a sequence $\SSS=\{\SSS_n\}$, where for each $n\in \NN$
$$\SSS_n = <E_{p_n,\phi_n}>=\{(E_{p_n,\phi_n})^k\,|\, k\in \ZZ\}$$
Recall that $<E_{p_n,\phi_n}> = <E_{p_n, 2\pi / \omega_n}>$, where $\omega_n=|\phi_n|$ is the order of $\phi_n$.
We will show in the next proposition that the limit of the sequence $\SSS$ in the Chabauty topology is governed by the limits of $r_n=|p_n|$, $\omega_n=|\phi_n|$ and 
$\rho_n=|\mu_n|$ where $\mu_n = \frac{1-e^{2\pi i /\omega_n}}{1-|p_n|^2}$.
Remark that, since the space of all closed subgroups is compact for the Chabauty topology, extracting a subsequence if necessary,
we can always assume that these sequences converge.
The different limits $\SSS$ can have are summarized in Proposition \ref{elllimchab} below. 

\begin{prop} 
\label{elllimchab}
The following table shows the Chabauty limit of $\SSS$ according to the possible limits of $r_n$ and $|\phi_n|$. 

\begin{tabular}{ *{3}{|c}|} 
   \hline
   \backslashbox{$p_\infty$}{$\omega_\infty$} & $< \infty$ & $\infty$  \\
   \hline
   $|p_\infty|\neq 1$ & $<E_{p_\infty, 2\pi/\omega_\infty}>$ & subgroup of all rotations around $p_\infty$ \\
   \hline
   $|p_\infty|=1$ & $\{1\}$ & $<P_{p_\infty, \rho_\infty}>$ \\
   \hline
  \end{tabular} \\
Here the notations are $p_\infty=\lim p_n \in \overline{\DD}$, $\omega_\infty=\lim \omega_n\in \NN\cup \infty$ and $\rho_\infty=\lim \rho_n\in [0,\infty]$.
By convention, for every $p\in S^1$, $<P_{p, \infty}>$ is the trivial group and  $<P_{p, 0}>$ is the subgroup of all parabolic elements fixing $p$ (this convention will be justified in Subsection \ref{pcase}). 
\end{prop}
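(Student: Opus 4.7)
The proof is a case analysis over the four entries of the table, in each case combining the explicit matrix formula of Lemma \ref{ellmatrix} with the Reduction Lemma (Proposition \ref{redlem}) to convert Hausdorff limits of parameter sets into Chabauty limits of subgroups. The two cases with $\omega_\infty < \infty$ are handled directly. After passing to a subsequence, we may assume $\omega_n \equiv \omega_\infty$, so each $\SSS_n$ is a set of exactly $\omega_\infty$ matrices, and the behavior is governed by the coefficient $\mu_n = (1 - e^{2\pi i/\omega_\infty})/(1 - |p_n|^2)$. When $|p_\infty| < 1$, $\mu_n$ stays bounded and the matrix entries depend continuously on $p_n$, so each power of the generator converges entrywise to the corresponding power of $E_{p_\infty, 2\pi/\omega_\infty}$, giving Hausdorff convergence of finite sets to $\langle E_{p_\infty, 2\pi/\omega_\infty}\rangle$. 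When $|p_\infty| = 1$, on the contrary $|\mu_n| \to \infty$, so every non-identity power's matrix blows up; those elements escape to the infinity point $P_\infty$ of $\overline{\PR}$ and only $\Id$ remains, yielding the trivial group.

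When $\omega_\infty = \infty$ and $|p_\infty| < 1$, I parametrize by angle: set $\varphi_n : [0, 2\pi] \to \PR$ by $\alpha \mapsto E_{p_n, \alpha}$, and $F_n = \frac{2\pi}{\omega_n}\ZZ \cap [0, 2\pi]$. The domain being compact and $\mu_n^{(\alpha)} = (1 - e^{i\alpha})/(1 - |p_n|^2)$ bounded uniformly in $\alpha$, the $\varphi_n$ converge uniformly to $\varphi(\alpha) = E_{p_\infty, \alpha}$; and $F_n \to [0, 2\pi]$ in the Hausdorff topology, since successive points in $F_n$ are $2\pi/\omega_n$-apart. Lemma \ref{fact1} then yields $\SSS_n = \varphi_n(F_n) \to \varphi([0, 2\pi])$, the full one-parameter subgroup of rotations around $p_\infty$.

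The decisive case is $\omega_\infty = \infty$ and $|p_\infty| = 1$, where the angle parametrization degenerates (since $\phi_n \to 0$ collapses the generator) and one parametrizes by the integer exponent instead. Set $X = \ZZ$, $Y = \PR$, $\Omega_n = \{k \in \ZZ : |k| \leq \omega_n/2\}$, $\varphi_n(k) = (E_{p_n, \phi_n})^k$, and $F_n = \Omega_n$; the condition $\omega_n \to \infty$ ensures the hypothesis of Remark \ref{rmkomega} holds. The key computation, from the Taylor expansion of $1-e^{ik\phi_n}$ and the definition of $\rho_\infty$, is
\[
\mu_n^{(k)} = \frac{1-e^{ik\phi_n}}{1-|p_n|^2} = k\cdot\frac{1-e^{ik\phi_n}}{k\phi_n}\cdot\frac{\phi_n}{1-|p_n|^2} \longrightarrow -ik\rho_\infty,
\]
and substituting into Lemma \ref{ellmatrix} shows that $\varphi_n(k)$ converges in $\PR$ to $P_{\theta, k\rho_\infty}$, with $p_\infty = e^{i\theta}$ and up to an overall sign irrelevant for the generated subgroup. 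Since convergence on the discrete set $\ZZ$ is automatically uniform on compact (finite) subsets and $F_n \to \ZZ \cup \{\infty\}$ in $\overline{\ZZ}$, the Reduction Lemma via Remark \ref{rmkomega} gives $\SSS_n \to \langle P_{p_\infty, \rho_\infty}\rangle$ in the Chabauty topology. The boundary cases $\rho_\infty = 0$ and $\rho_\infty = \infty$ are consistent with the matrix limit and the convention: all non-trivial $P_{\theta, k\rho_\infty}$ collapse to $\Id$ (giving the full parabolic subgroup in the closure) or escape to $P_\infty$ (giving the trivial group).

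The principal obstacle is this last case. First, the natural angle parametrization fails because $\phi_n \to 0$ forces the cyclic group to collapse to $\{0\}$; switching to the exponent parametrization repairs this but introduces periodicity of $\varphi_n$ with unbounded period $\omega_n$, which defeats the properness hypothesis of Proposition \ref{redlem} on the whole of $\ZZ$. The remedy is to restrict to the fundamental domain $\Omega_n$ and invoke Remark \ref{rmkomega}. Second, the matching of the limit $\mu_n^{(k)} \to -ik\rho_\infty$ with the normalized parabolic matrix $P_{\theta, k\rho_\infty}$ of Section \ref{ParaMat} is what identifies $\rho_\infty$ as the correct geometric invariant for the limit parabolic subgroup, and this is the point at which the specific shape of the normalization chosen for $P_{\theta, \rho}$ matters.
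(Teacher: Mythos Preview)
Your argument and the paper's diverge in the decisive case $|p_\infty|=1$, $\omega_\infty=\infty$. The paper does \emph{not} parametrize by the exponent $k\in\ZZ$; it parametrizes by the value of $\mu$ itself, taking $X=\CC$, setting
\[
\EE_n=\Bigl\{\tfrac{1-e^{ik\phi_n}}{1-r_n^2}\,:\,k\in\ZZ\Bigr\}\subset\CC,
\qquad
\varphi(z)=\begin{pmatrix}1-z & p_\infty z\\ -\overline{p_\infty}z & 1+z\end{pmatrix},
\]
and computing the Hausdorff limit of $\EE_n$ in $\CC$. The point of this choice is that $\varphi:\CC\to\PC$ is proper \emph{regardless of $\rho_\infty$}, so a single application of the Reduction Lemma covers $\rho_\infty\in(0,\infty)$, $\rho_\infty=0$ (where $\EE_n\to i\RR$ and $\varphi(i\RR)$ is the full parabolic subgroup), and $\rho_\infty=\infty$ (where $\EE_n\to\{0\}$) uniformly.

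Your parametrization by $k\in\ZZ$ works cleanly for $\rho_\infty\in(0,\infty)$, but the boundary cases are not handled. When $\rho_\infty=0$ your limit map is $\varphi(k)=P_{\theta,0}=\Id$ for every $k$, i.e.\ a constant map $\ZZ\to\PR$, which is not proper; the Reduction Lemma therefore does not apply, and the sentence ``all non-trivial $P_{\theta,k\rho_\infty}$ collapse to $\Id$ (giving the full parabolic subgroup in the closure)'' is exactly the assertion to be proved, not an argument for it. Likewise when $\rho_\infty=\infty$ the pointwise limit $\varphi(k)$ does not exist in $\PR$ for $k\neq 0$, so again there is no map to which Proposition~\ref{redlem} can be applied. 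You also omit the verification of the compact--preimage hypothesis of Proposition~\ref{redlem} for your $\varphi_n$ on $\Omega_n$; it does hold for $0<\rho_\infty<\infty$ (via $|\mu_n^{(k)}|\gtrsim |k|\rho_\infty$ on the fundamental domain), but it fails for $\rho_\infty=0$, which is the same obstruction in another guise.

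In short: the three ``easy'' cells of the table and the generic subcase $\rho_\infty\in(0,\infty)$ are fine, but the extremes $\rho_\infty\in\{0,\infty\}$ need a separate argument. The cleanest fix is precisely the paper's: move the parameter from $\ZZ$ to $\CC$ so that properness of the limit map is automatic, and let the Hausdorff limit of the $\mu$-sets carry the dependence on $\rho_\infty$.
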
 

\begin{proof}
Since the most interesting case is when $r_n=|p_n| \to 1$ and $|\phi_n| \to \infty$, let us first assume we are in that case.

Set $\EE_n := \{ \dfrac{1-e^{i k \phi_n}}{1- r_n^2}\,|\, k \in
\ZZ\}$, and define the maps $\varphi_n, \varphi: \CC \to \PC$ by 
\begin{align*}
\varphi_n(z) &=(1-(1-r_n^2)z)^{-1/2} \begin{pmatrix} 1 - z & p_n z \\
-\overline{p_n} z & 1 + r_n^2 z \end{pmatrix}  \\
\varphi(z) &= \begin{pmatrix} 1 - z & p_\infty z \\ -\overline{p_\infty} z & 1
+ z \end{pmatrix} 
\end{align*}
One should note that the members of $\EE_n$ are the complex numbers $``\mu"$ for the matrices $E^k_{p_n, \phi_n}$. 

Then by construction $\SSS_n=\varphi_n(\EE_n)$, and
$\varphi$ is continuous and proper. The following lemmas show that we can apply the Reduction Lemma.

\begin{lem}
\label{lemell1}
$\varphi_n$ converges to $\varphi$ uniformly on every compact subset.
\end{lem}
\begin{proof}
It is sufficient to prove it for every compact $K_R=\{z\in \CC;\; |z|\leq R\}$.
Fix some $\epsilon>0$.
Since $r_n\to 1$, we can find for every $R>0$ some integer $N$ such that, for all $n\geq N$ and all $z\in K_R$,
\[
 1-\epsilon\leq |1-(1-r_n^2)z|^{-1/2}\leq 1+\epsilon
\]
Therefore, we can also find an integer such that for every $n$ larger than this $N$,
\[
 \|\varphi_n(z)-\varphi(z)\|_\infty \leq \epsilon
\]
holds for every $z\in K_R$. 
\end{proof}

\begin{lem}
\label{lemell2}
 For any compact subset $K$ of $\PR$, the closed subset of $\CC$
\[
\overline{\bigcup_{n\geq N}\varphi_n^{-1}(K)}
\]
is compact for $N$ large enough.
\end{lem}
\begin{proof}
It is sufficient to prove that for every $R>0$ and for every $z$ with $|z|>R$, one of the entries of $\varphi_n(z)$ has a modulus greater that some quantity $A(R)$ depending only on $R$, with $A(R) \to \infty$ as $R\to \infty$.
But this is clear, because the first entry of $\varphi_n(z)$ has modulus
\[
 |1-(1-r_n^2)z|^{-1/2} |1-z|\geq \frac{R-1}{\sqrt{R+1}}
\]
\end{proof}

Therefore, we can apply the Reduction Lemma. Thus, whenever $\EE_n$ converge to some closed set $\EE$ in the Hausdorff topology of $\CC$, then the sequence $\SSS_n=\varphi_n(\EE_n)$ converges to $ \varphi(\EE)$ in the Chabauty topology. 

\begin{rmk}
 The functions $\varphi_n$ are not actually defined for $z=1/(1-r_n^2)$.
But since $r_n \to 1$, we can apply Remark \ref{rmkomega} with $\Omega_n=\CC \setminus \{1/(1-r_n^2)\}$.
\end{rmk}

The final piece of information we need in order to conclude is the following lemma. 
\begin{lem} In the Hausdorff topology on $\CC$, the sequence of sets $\EE_n$ converges to the set $\EE=\{k i \rho_{\infty} ; k \in \ZZ\}$, 
where $\rho_{\infty} = \lim_{n \to \infty} |\mu_n|$. 
\end{lem} 
\begin{proof}
Note that $\lim_{n \to \infty} \mu_n = i \rho_{\infty}$. This can easily be proved in a direct manner, see Figure \ref{en} for geometric intuition. 
\end{proof}

Note that the image of the set $\{k i \rho_{\infty} \,|\, k \in \ZZ\}$ under $\varphi$ is 
\[
\left\{
\begin{pmatrix} 1 - k i \rho_{\infty} & k i \rho_{\infty} p_{\infty} \\ -k i \rho_{\infty} \overline{p_{\infty}} & 1 + k i \rho_{\infty} \end{pmatrix} \,|\, k \in \ZZ
\right\} 
\]
which is exactly the subgroup generated by $P_{p_\infty, \rho_\infty}$.

Thus we are done for the case where $r_n=|p_n| \to 1$ and $|\phi_n| \to \infty$. 

The other cases, easier and similar, are left to the reader.
\end{proof} 



\begin{figure}[ht]
\begin{center}
\includegraphics[scale=0.3]{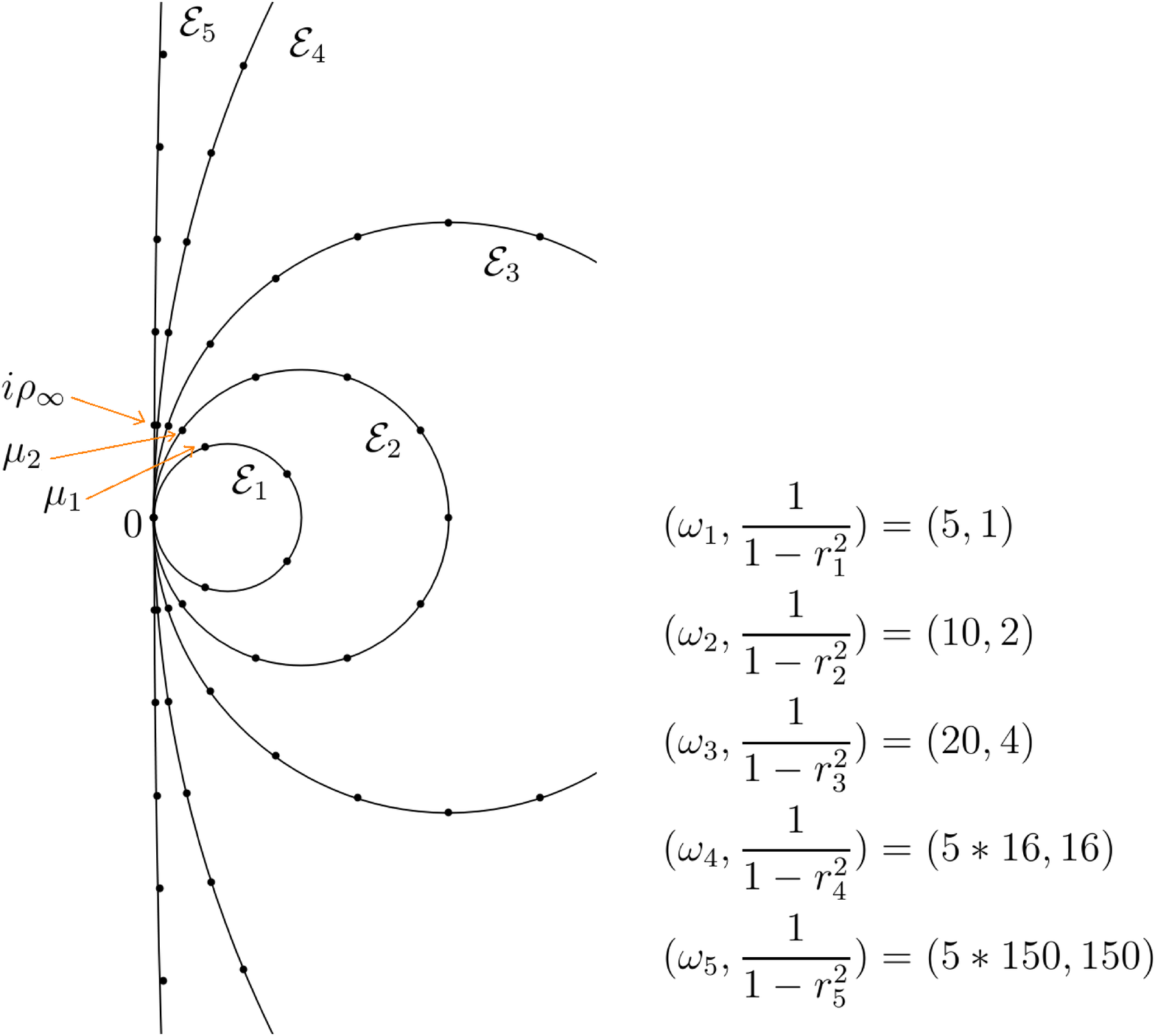}
\end{center}
\caption[ $\EE_n$]{ $\EE_n$: when $r_n \to 1$, the circles degenerate to the vertical axis (see $\EE_5$). Using the notation $\mu_n = \dfrac{1-e^{2\pi i/\omega_n}}{1-r_n^2}$, 
we see that $\mathcal{E}_n$ tends to $<i \rho_{\infty}>$ in the Chabauty topology. }
\label{en}
\end{figure}

\subsection{ The hyperbolic case }

We now consider a sequence $\SSS=\{\SSS_n\}$, where for each $n\in \NN$
$$\SSS_n = <H_{(\theta_1)_n,(\theta_2)_n,a_n}>$$
with $(\theta_1)_n,(\theta_2)_n \in [0,2\pi]$, $\Delta_n=(\theta_2)_n-(\theta_1)_n \in (0,2\pi)$ and $a_n>1$.
In analogy with the elliptic case, we will see in the next proposition that the limit of the sequence $\SSS$ is governed by the limits of
$(\theta_1)_n$, $(\theta_2)_n$, $a_n$ and 
$\rho_n=|\nu_n|$ where $\nu_n = \frac{a_n-1}{1-e^{i\Delta_n}}$.
As before, we can always assume that these sequences converge.

The different limits $\SSS$ can have are summarized in Proposition \ref{hyplimchab} below.

\begin{prop} 
\label{hyplimchab}
The following table shows the Chabauty limit of $\SSS$ according to the possible limits of $(\theta_1)_n$, $(\theta_2)_n$, $a_n$ and 
$\rho_n=|\nu_n|=\frac{a_n-1}{|1-e^{i\Delta_n}|}$.

\begin{tabular}{|c|c|c|c|} 
   \hline
   \backslashbox{$\Delta_{\infty}$}{$a_{\infty}$} & $\infty$ & $1 < a_\infty < \infty$ & $1$  \\
   \hline
   $\Delta_\infty \in (0,2\pi)$ & $\{1\}$ & $H_{(\theta_1)_\infty,(\theta_2)_\infty,a_\infty}$ & subgroup of all hyperbolic elements \\
    &  &  & fixing $(\theta_1)_{\infty}$ and $(\theta_2)_{\infty}$ \\
   \hline
   $\Delta_\infty = 0$, $2 \pi$ & $\{1\}$ & $\{1\}$ & $<P_{p_\infty, \rho_\infty}>$ \\
   \hline
\end{tabular} \\ 

Here the notations are $(\theta_i)_\infty=\lim (\theta_i)_n$, $a_{\infty} = \lim a_n$, $\Delta_{\infty} = \lim \Delta_n$ and $\rho_\infty=\lim \rho_n\in [0,\infty]$ 
with the same convention for $<P_{p, \infty}>$ and  $<P_{p, 0}>$ as in Proposition \ref{elllimchab}
\end{prop}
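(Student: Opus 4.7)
The strategy parallels the proof of Proposition \ref{elllimchab} via the Reduction Lemma. Since $H_{\theta_1,\theta_2,a}^k$ has translation length $k\log a$ along the same axis, its $\nu$-parameter is $\nu^{(k)} = (a^k-1)/(1-e^{i\Delta})$. I will therefore set
\[
\FF_n = \Big\{ \frac{a_n^k - 1}{1-e^{i\Delta_n}} : k \in \ZZ \Big\}, \qquad
\varphi_n(z) = \frac{1}{\sqrt{1 + z(1-e^{i\Delta_n})}}\begin{pmatrix} 1 - ze^{i\Delta_n} & ze^{i(\theta_2)_n} \\ -ze^{-i(\theta_1)_n} & 1+z\end{pmatrix}.
\]
A direct computation shows the matrix determinant equals $1+z(1-e^{i\Delta_n})$ (which specialises to $a_n$ at $z=\nu_n$), so that $\SSS_n = \varphi_n(\FF_n)$ by Lemma \ref{hypmatrix}.

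The delicate case is $\Delta_\infty \in \{0,2\pi\}$ together with $a_\infty = 1$. In this regime I would take as limiting map
\[
\varphi(z) = \begin{pmatrix} 1-z & ze^{i\theta_\infty} \\ -ze^{-i\theta_\infty} & 1+z\end{pmatrix},
\]
where $\theta_\infty$ is the common limit of $(\theta_1)_n$ and $(\theta_2)_n$ modulo $2\pi$. Since $\det\varphi(z) = 1$ and the entries grow linearly in $|z|$, $\varphi$ is continuous and proper into $\PC$. The two hypotheses of Proposition \ref{redlem}, namely uniform convergence $\varphi_n \to \varphi$ on compacts and compactness of $\overline{\bigcup_{n\geq N}\varphi_n^{-1}(K)}$ for compact $K\subset \PC$ and $N$ large, are checked exactly as in Lemmas \ref{lemell1} and \ref{lemell2}: the prefactor $(1+z(1-e^{i\Delta_n}))^{-1/2}$ tends uniformly to $1$ on compact subsets of $\CC$, and the $(1,1)$-entry forces the operator norm of $\varphi_n(z)$ to grow linearly with $|z|$.

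Next, I compute the Hausdorff limit of $\FF_n$. Writing $a_n = 1+t_n$ with $t_n \to 0$, the expansion $a_n^k - 1 = k t_n + O(t_n^2)$ combined with $1-e^{i\Delta_n}\sim -i\Delta_n$ (resp.\ $i(2\pi-\Delta_n)$) as $\Delta_n\to 0$ (resp.\ $2\pi$) gives $\nu_n^{(k)}\to \pm ik\rho_\infty$ for each fixed $k\in\ZZ$. Since $\langle P_{\theta,\rho}\rangle = \langle P_{\theta,-\rho}\rangle$ the sign is immaterial, and as the progression is unbounded $\infty$ is automatically a limit point. Thus $\FF_n\to\{ik\rho_\infty : k\in\ZZ\}\cup\{\infty\}$ in $\overline{\CC}$, and the Reduction Lemma yields
\[
\lim_n \SSS_n = \{\varphi(ik\rho_\infty) : k\in\ZZ\} = \{P_{\theta_\infty, k\rho_\infty} : k\in \ZZ\} = \langle P_{\theta_\infty,\rho_\infty}\rangle,
\]
with the boundary conventions for $\rho_\infty \in \{0,\infty\}$ treated as in Proposition \ref{elllimchab}.

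The remaining five cells of the table are routine. For $\Delta_\infty\in(0,2\pi)$ and $1<a_\infty<\infty$, continuity of matrix multiplication applied to Lemma \ref{hypmatrix} yields convergence of each $H_n^k$ to the corresponding power of the limit element, and the full cyclic group limit follows. For $\Delta_\infty\in(0,2\pi)$ and $a_\infty=1$, Lemma \ref{ChabHyp} directly gives the subgroup of all hyperbolic elements sharing the limit axis. The three cells producing the trivial group follow from a norm estimate on $\varphi_n(\nu_n^{(k)})$: whenever $a_\infty=\infty$, or $\Delta_\infty\in\{0,2\pi\}$ with $a_\infty$ bounded away from $1$, for each fixed $k\neq 0$ some entry of $H_n^k$ diverges and so escapes to $P_\infty$ in the one-point compactification. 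The main obstacle, as in the elliptic case, is the bookkeeping in the degenerate cell above, where the Taylor expansion must be handled uniformly enough in $k$ to identify the Hausdorff limit of the full arithmetic progression $\FF_n$ rather than merely its individual terms.
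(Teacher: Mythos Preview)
Your proof plan is correct and follows essentially the same approach as the paper: the same sets $\FF_n$, the same maps $\varphi_n$ and $\varphi$, the same appeal to the Reduction Lemma for the degenerate cell $\Delta_\infty\in\{0,2\pi\}$, $a_\infty=1$, and the same disposal of the remaining cells as routine. The paper is even terser than you are---it relegates the Hausdorff limit of $\FF_n$ to a one-line lemma with a picture, and leaves the five easy cells entirely to the reader---so your Taylor-expansion sketch and your explicit norm estimates for the trivial-group cells already go slightly beyond what the paper writes down. Your observation about the sign of $\nu_n$ depending on whether $\Delta_n\to 0$ or $\Delta_n\to 2\pi$ is a nice touch the paper glosses over.
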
 
\begin{proof}

As above, let us first assume that $a_\infty=1$ and $\Delta_\infty\in \{0,2\pi\}$; define $\theta_\infty=(\theta_1)_\infty=(\theta_2)_\infty \mod 2\pi$.

Also, set $\FF_n := \{ \dfrac{{a_n}^k -1}{1- e^{i \Delta_n}} \,|\, k \in \ZZ \}$
and define the maps $\varphi_n, \varphi: \CC \to \PC$ by 
\begin{align*}
\varphi_n(z) &=(z(1-e^{i \Delta_n}) + 1)^{-1/2} \begin{pmatrix} 1 - z e^{i \Delta_n} & z e^{i \theta_2} \\- z e^{-i \theta_1}& 1+ z \end{pmatrix}  \\
\varphi(z) &= \begin{pmatrix} 1 - z & z e^{i \theta_{\infty}} \\- z e^{-i \theta_{\infty}}& 1+ z \end{pmatrix}
\end{align*}
One should note that the members of $\FF_n$ are the complex numbers $``\nu"$ for the matrices $H^k_{(\theta_1)_n, (\theta_2)_n, a_n}$. 

Then by construction $\SSS_n=\varphi_n(\FF_n)$, and
$\varphi$ is continuous and proper. 

\begin{lem}
$\varphi_n$ converges to $\varphi$ uniformly on every compact subset.
\end{lem}
\begin{proof}
The same argument as in the proof of Lemma \ref{lemell1} works by simply replacing $1-(1-r_n^2)z$ by $z(1-e^{i \Delta_n}) + 1$. 
\end{proof}

\begin{lem}
 For any compact subset $K$ of $\PR$, the closed subset of $\CC$
\[
\overline{\bigcup_{n\geq N}\varphi_n^{-1}(K)}
\]
is compact for $N$ large enough.
\end{lem}
\begin{proof}
The same argument as in the proof of Lemma \ref{lemell2} works by simply replacing $|1-(1-r_n^2)z|^{-1/2} |1-z|$ by $|z(1-e^{i \Delta_n}) + 1|^{-1/2} |1-ze^{i \Delta_n}|$. 
\end{proof}

\begin{lem} In the Hausdorff topology on $\CC$, the sequence of sets $\FF_n$ converges to the set $\FF=\{k i \rho_{\infty} ; k \in \ZZ\}$, 
where $\rho_{\infty} = \lim_{n \to \infty}  |\nu_n| $.
\end{lem} 
\begin{proof}
Note that $\lim_{n \to \infty} \nu_n = i \rho_{\infty}$. This can easily be proved in a direct manner, see Figure \ref{fn} for geometric intuition.
\end{proof}

Here again, the image of $\{k i \rho_{\infty} \,|\, k \in \ZZ\}$ under $\varphi$ is simply $<P_{p_\infty, \rho_\infty}>$.

Thus we are done for the case where $a_\infty=1$ and $\Delta_\infty\in \{0,2\pi\}$. 

The other cases, easier and similar, are left to the reader.
\end{proof}

\begin{figure}[ht]
\begin{center}
\includegraphics[scale=0.3]{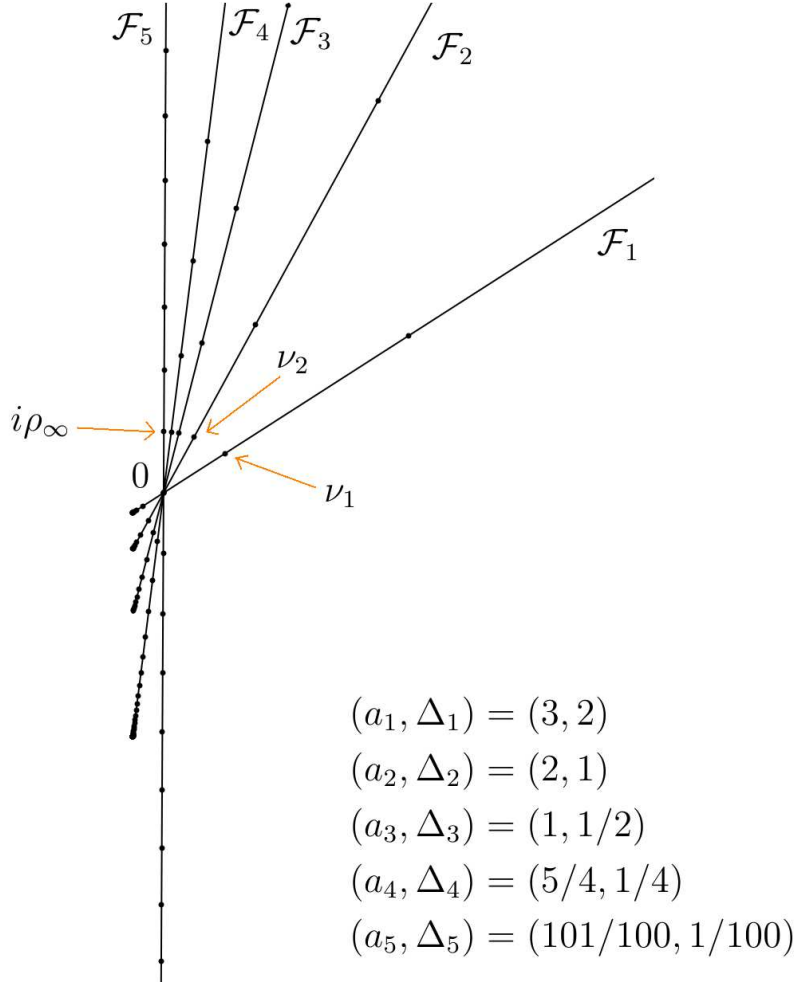}
\end{center}
\caption[$\FF_n$]{ $\FF_n$: when $\Delta_n \to 0$,  the lines containing $\FF_n$ approach the vertical line, and the points on these lines are more and more equally spaced (see $\FF_5$).  }
\label{fn}
\end{figure}

\subsection{ The parabolic case }
\label{pcase}
 In Sections 6.1 and 6.2, we saw that there are subgroups with parabolic generators on the boundary of the space of subgroups with elliptic/hyperbolic generators. 
In the elliptic case, $\rho_\infty =  \lim_{n \to \infty}  |\mu_n| $ indicated which subgroup with parabolic generator was the wanted limit in the Chabauty topology.
In the hyperbolic case, $\rho_\infty = \lim_{n \to \infty} |\nu_n| $ was the good parameter. 

We are interested now in studying the sequences $\SSS=(\SSS_n)$ of subgroups $\SSS_n=<P_{\theta_n, \rho_n} >$, where
\[
P_{\theta_n, \rho_n} = \begin{pmatrix} 1 - i \rho_n & i \rho_n e^{i\theta_n} \\ -i \rho_n e^{-i \theta_n} & 1 + i \rho_n \end{pmatrix}
\]
Set $\theta_\infty = \lim \theta_n$ and $\rho_\infty = \lim \rho_n$. 
 The following proposition is straightforward and its proof is essentially the same as the one for $\RR$ given in Subsection \ref{ss22}
(one may use the Reduction lemma to reduce this case to the case of $\RR$; we left this to the interested readers).  

\begin{prop} There are three cases. 
\begin{itemize}
\item If $\rho_\infty = 0$, then $\SSS$ converges to the group consisting of all parabolic isometries fixing $e^{i \theta_\infty}$.  
\item If $0 < \rho_\infty < \infty$, then $\SSS$ converges to the group generated by $P_{\theta_\infty, \rho_\infty}$. 
\item If $\rho_\infty = \infty$, then $\SSS$ converges to the trivial group. 
\end{itemize} 
\end{prop}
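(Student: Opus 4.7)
The plan is to reduce this statement to the description of the Chabauty topology of $\RR$ from Subsection \ref{ss22}, by applying the Reduction Lemma (Proposition \ref{redlem}). The key algebraic observation is that under our normalization, $P_{\theta,\rho}$ represents the translation $[z\mapsto z-2\rho]$ in $\HH^2$, so that $P_{\theta_n,\rho_n}^k = P_{\theta_n, k\rho_n}$ for every $k\in\ZZ$. Consequently, if I set
\[
\varphi_n(z) = \begin{pmatrix} 1 - iz & iz e^{i\theta_n} \\ -iz e^{-i\theta_n} & 1 + iz \end{pmatrix}, \qquad \varphi(z) = \begin{pmatrix} 1 - iz & iz e^{i\theta_\infty} \\ -iz e^{-i\theta_\infty} & 1 + iz \end{pmatrix},
\]
then $\SSS_n = \varphi_n(G_{\rho_n})$, where $G_r = r\ZZ$ is the cyclic subgroup of $\RR$ treated in Subsection \ref{ss22}.

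Next I would check the hypotheses of Proposition \ref{redlem}. Uniform convergence $\varphi_n\to\varphi$ on every compact subset of $\CC$ is immediate since each entry of $\varphi_n(z)$ is affine in $z$ with a coefficient that depends continuously on $\theta_n$ and has unit modulus. Continuity and properness of $\varphi$ follow from its affine form, because the modulus of its $(1,1)$-entry is at least $|z|-1$. The same lower bound, applied to $\varphi_n$ uniformly in $n$, shows that $\overline{\bigcup_{n\geq N}\varphi_n^{-1}(K)}$ is compact for every compact $K\subset\PR$ and every~$N$.

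Proposition \ref{redlem} then yields that $\SSS_n=\varphi_n(G_{\rho_n})$ converges in the Chabauty topology of $\PR$ to $\overline{\varphi(G)}$, where $G$ is the Hausdorff limit of $G_{\rho_n}$ in~$\RR$. The three cases follow by reading off the limit on the $\RR$-side: Lemma \ref{ChabR2} gives $G_{\rho_n}\to\RR$ when $\rho_\infty = 0$, and $\varphi(\RR)$ is the one-parameter parabolic subgroup fixing $e^{i\theta_\infty}$; when $0<\rho_\infty<\infty$, continuity of $r\mapsto G_r$ at positive reals gives $G_{\rho_n}\to G_{\rho_\infty}$, and $\varphi(G_{\rho_\infty})=\langle P_{\theta_\infty,\rho_\infty}\rangle$; finally Lemma \ref{ChabR} gives $G_{\rho_n}\to\{0\}$ when $\rho_\infty=\infty$, and $\varphi(\{0\})=\{\Id\}$. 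The only step requiring any care is the uniform-in-$n$ properness bound needed for the compactness hypothesis of Proposition \ref{redlem}; this is precisely the point where uniformity would fail in the counterexamples of Section \ref{coolprop}, and it is what I would expect to be the main (still mild) obstacle.
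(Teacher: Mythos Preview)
Your proposal is correct and is precisely the argument the paper has in mind: the authors do not write out a proof but state that it is ``essentially the same as the one for $\RR$'' and explicitly suggest using the Reduction Lemma to reduce to Subsection~\ref{ss22}. You have simply supplied the details they left to the reader, via the identity $P_{\theta,\rho}^k=P_{\theta,k\rho}$ and the maps $\varphi_n,\varphi$, and your verification of the hypotheses (uniform convergence, properness, and the uniform-in-$n$ compactness condition via the $(1,1)$-entry bound) is sound.
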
  

In particular, the subgroups with one parabolic generator cannot accumulate to the subgroups with elliptic or hyperbolic generators.

Now we are ready to see how the spaces $\ES$, $\HS$, $\PS$ look like. 
 
\pagebreak

\section{Picture of $\ES$} 
\label{es}
 Let us recall some notations from Section \ref{overview}.
For $n = \{2, 3, \ldots, \infty\}$, $D_n$ denotes a copy of the open unit disk, such that each point $p$ of $D_n$ represents the group generated by the rotation around $p$ of order $n$ in $\DD$.
In other words, $D_n$ is the space of subgroups with one elliptic generator of order $n$.
The elements of $D_{\infty}$ are those subgroups consisting of all rotations around the given point in $\DD$.

We are now going to \textit{bend} these disks, by requiring that, whenever a point of a disk $D_n$ is at altitude $x_3>0$, then its parameter $\mu$ verifies $|\mu|=x_3$. 
More precisely, represent $D_n$ in $\RR^3$ by the image of $\DD$ by the map
\begin{align*}
 \DD & \rightarrow \RR^3 \\
 p =r e^{i\theta} & \mapsto \big(r \cos \theta, r \sin \theta, \frac{|1-e^{2\pi i /n}|}{1-r^2}\big)
\end{align*}
Note that this image blows up when $p$ approaches the boundary of $\DD$.

Also, identify $D_{\infty}$ with the open unit disk in the $(x,y)$-plane,
and identify the cylinder 
\[
 \{(e^{i\theta},\rho)\,|\,  \rho\geq0\}
\]
with $\PS$ by asking $(e^{i\theta},\rho)$ with $\rho>0$ to represent the subgroup generated by the parabolic element fixing $e^{i\theta} \in S^1$ and having the form $z\mapsto z + 2\rho $ in our normalization (see Section \ref{matrix}).
Of course then, every element $(e^{i\theta},0)$ of the unit circle in the $(x,y)$-plane represents the subgroup of all parabolic elements fixing $e^{i\theta}$.
Finally, the identity group should be identified with a point at infinity of coordinates $(0,0,\infty)$.

The reader is invited to check, using the results of Section \ref{limchab}, that 
whenever a sequence of points $p_n$ in this picture converges to some $p_\infty$, then the subgroups represented by $p_n$ converge in the Chabauty topology to the subgroup represented by $p_\infty$.

\begin{figure}[ht]
\begin{center}
\includegraphics[scale=0.3]{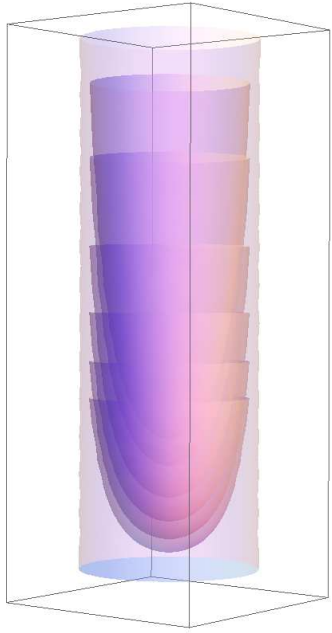}
\end{center}
\caption[The space of subgroups with one elliptic generator]{The leaves $D_n$ accumulate to $D_\infty$ and $\PS$. Here $n = 2, \ldots, 7$. }
\label{lamination}
\end{figure}

\pagebreak

\section{Picture of $\HS$} 
\label{hs}

In Section \ref{overview}, we explained how $\HS$ has in its interior a cone on the open M\"{o}bius band.
This cone was foliated by open M\"{o}bius bands in the obvious way.

We are going to bend the leaves $M_a$ of this foliation, like in Section \ref{es}, by requiring that a point of at altitude $x_3>0$ always represents a subgroup generated by a hyperbolic element with parameter $\nu$ verifying $|\nu|=x_3$. 
More precisely, for any $a>0$, represent $M_a$ in $\RR^3$ by the image of the upper left triangle $M$ of $[0,2\pi]^2$ that parametrizes an open M\"{o}bius band (see Figure \ref{mobiusband} in Section \ref{overview}), by the map
\begin{align*}
 M & \rightarrow \RR^3 \\
 (\theta_1,\theta_2) & \mapsto \big(\theta_1, \theta_2, \frac{a-1}{|1-e^{i(\theta_2-\theta_1)}|}\big)
\end{align*}

Also, define $M_0$ to be a copy of $M$ in the $(x,y)$-plane. Every element $(\theta_1,\theta_2)$ of $M_0$ represents the subgroup of all hyperbolic elements having the same two fixed points $e^{i\theta_1},e^{i\theta_2}\in S^1$.

Notice that the boundary of a M\"{o}bius band is a circle, here parametrized for all $a$ by the diagonal 
\[
 \partial M_a=\{(\theta,\theta,a)\,|\, \theta\in [0,2\pi] \}
\]
where $(0,0)$, $(0,2\pi)$ and $(2\pi,2\pi)$ are identified.

As for the elliptic case, note that each leaf blows up to infinity when $(\theta_1,\theta_2)$ approaches the diagonal $\partial M_a$.

Now, identify the set 
\[
 \{((\theta,\theta,\rho)\,|\, \theta\in [0,2\pi],\; \rho\geq 0\}
\]
with $\PS$ by asking $(\theta,\theta,\rho)$ with $\rho>0$ to represent the subgroup generated by the parabolic element fixing $e^{i\theta} \in S^1$ and having the form $z\mapsto z+ 2\rho $ in our normalization (see Section \ref{matrix}).
Of course then, every element $(\theta, \theta, 0)$ of $M_0$ represents the subgroup of all parabolic elements fixing $e^{i\theta}$.
Finally, the identity group is again identified with the point at infinity $(0,0,\infty)$.

As before, the reader is invited to check, using the results of Section \ref{limchab}, that 
whenever a sequence of points $p_n$ in this picture converges to some $p_\infty$, then the subgroups represented by $p_n$ converge in the Chabauty topology to the subgroup represented by $p_\infty$.

\begin{figure}[ht]
\begin{center}
\includegraphics[scale=0.33]{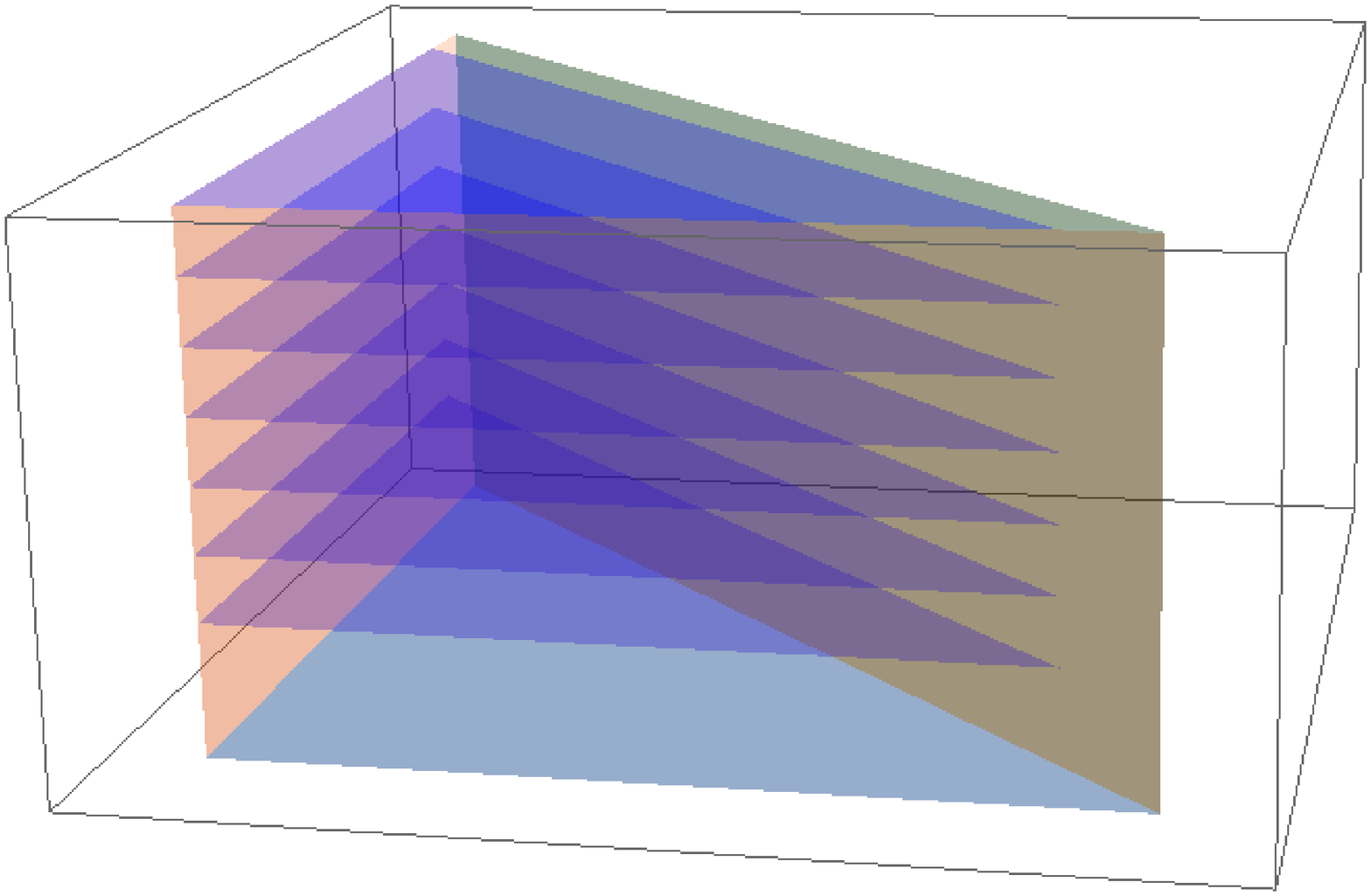}
\includegraphics[scale=0.33]{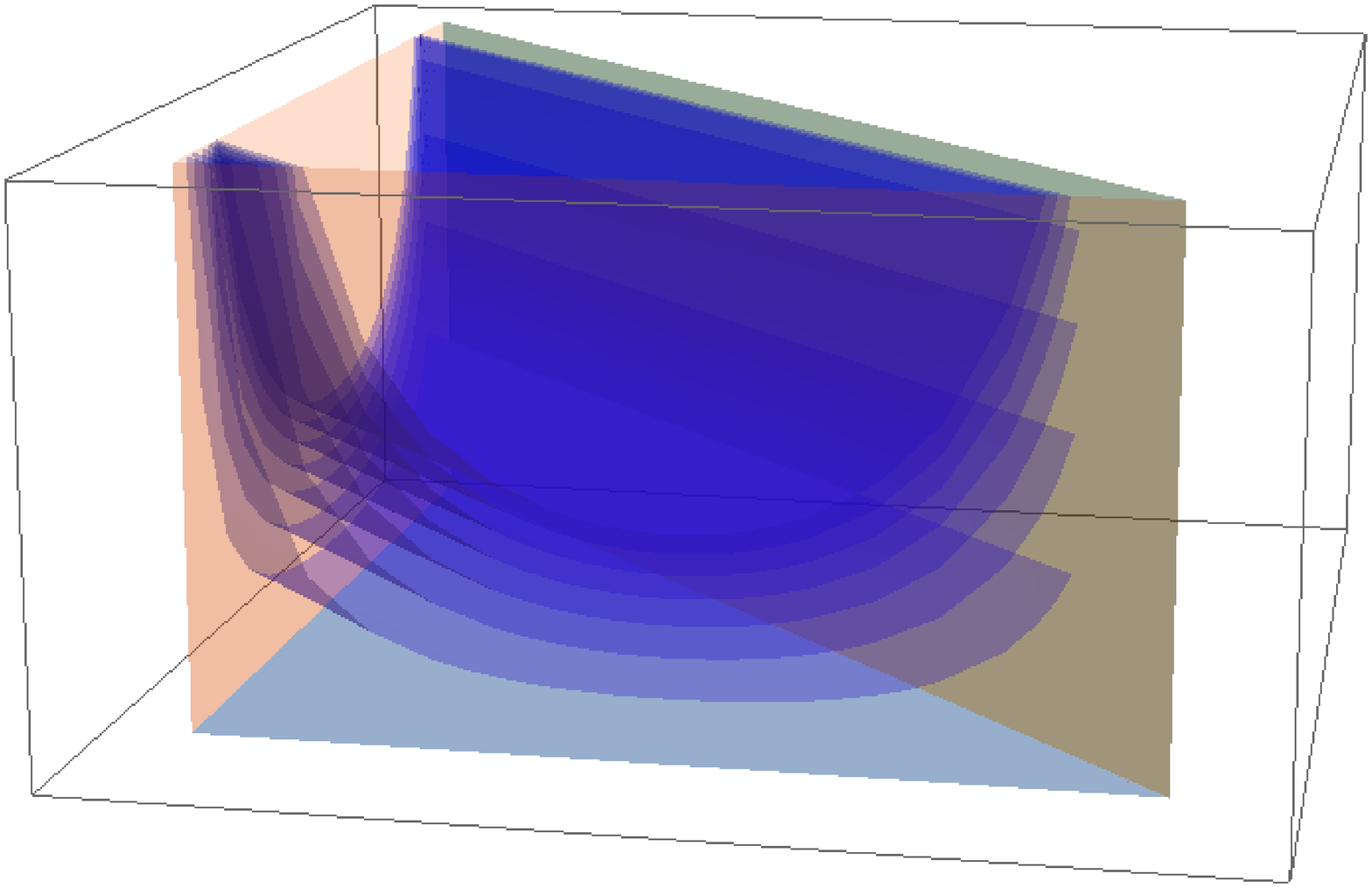}
\end{center}
\caption[The space of subgroups with one hyperbolic generator (correct version)]{The picture on the left shows the foliation by leaves of constant translation length in the original prism. The leaves are parallel triangles (or after gluing: M\"{o}bius bands). The picture on the right shows the same leaves after bending. To obtain a correct picture, the front and the left walls (colored in orange) should be identified so that each leaf becomes a M\"{o}bius band. 
Note that, for visibility reasons, we only drew a few leaves of the foliation. }
\label{foliations}
\end{figure}  


\pagebreak

\section{Gluing $\ES$ and $\HS$} 
\label{glu}
The last thing we have to do to complete the description of $\CS$ is to glue the spaces $\ES$ and $\HS$ along $\PS$.
In view of the bending we performed in Sections \ref{es} and \ref{hs}, it should be clear now that the correct gluing map is


\begin{align*}
\Phi: \PS = S^1 \times \RR^+ \subset \ES &\rightarrow \PS = \partial M_0 \times \RR^+ \subset \HS \\
 (e^{i \theta}, c) &\mapsto ( \theta, \theta, c) 
\end{align*}

This gives us the closure of the space of all subgroups of $\PR$ with one generator in the Chabauty topology.

 We finally obtained the main theorem.
 \begin{thm}[Main Theorem] 
 \label{mainthm} 
 The space of all geometric limits of closed subgroups of $\PR$ with one generator is $\CS = \ES \cup \HS / \sim$, where
\begin{itemize}
 \item[(1)] $\ES$ is a wedge sum of countably many 2-spheres $D_n / \partial D_n $, which accumulate to a disk $D_\infty$ and to the cone $\PS$ on the circle $\partial D_\infty$.
(see Figure \ref{lamination})
 \item[(2)] $\HS$ is the cone on a closed M\"{o}bius band, the inside of which is foliated by ``bent'' open M\"{o}bius bands, which accumulate to an open M\"{o}bius band $M_0$ and to the cone $\PS$ on the circle $\partial M_0$ (see Figure \ref{foliations}).
 \item[(3)] $\sim$ represents the gluing of $\ES$ and $\HS$ along $\PS$.
\end{itemize}

See Sections \ref{es} and \ref{hs} for complete parametrizations of $\ES$ and $\HS$. 
\end{thm}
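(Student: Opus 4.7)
The plan is to treat the Main Theorem as a synthesis of the work done in Sections \ref{matrix} through \ref{glu}: essentially, I need to check that the explicit ``bent'' parametrizations introduced in Sections \ref{es} and \ref{hs} define a continuous bijection onto $\CS$ in the Chabauty topology, and that the only identifications needed between $\ES$ and $\HS$ occur along $\PS$ via the map $\Phi$.

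For claim (1), I would argue as follows. By Lemma \ref{ellmatrix} and the remark following, the subgroups with one elliptic generator of fixed order $n$ form a disk $D_n$ parametrized by the fixed point $p \in \DD$. Proposition \ref{elllimchab} then dictates the boundary behaviour: when $n$ is fixed and $|p| \to 1$ the subgroup collapses to $\{\Id\}$ (so $D_n$ compactifies to the sphere $D_n/\partial D_n$); when $p$ stays in the interior and the order tends to infinity, we accumulate onto $D_\infty$; when both $|p|\to 1$ and $n\to\infty$, we accumulate onto $\PS$ with the limit determined by $\rho_\infty = \lim |\mu_n|$. The bending map $p = re^{i\theta} \mapsto (r\cos\theta, r\sin\theta, |\mu|)$ introduced in Section \ref{es} is designed precisely so that the third coordinate records $|\mu|$, hence Chabauty-convergence in $\ES$ is equivalent to coordinate-wise convergence of the bent picture — this is exactly the content we need.

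For claims (2) and (3), the argument is parallel. Lemma \ref{hypmatrix} together with the identification of the space of axes with the open M\"obius band (Figure \ref{mobiusband}) shows that subgroups with one hyperbolic generator are parametrized by $(\theta_1,\theta_2,a) \in M \times (1,\infty)$. Proposition \ref{hyplimchab} then gives the boundary behaviour, and the bending $(\theta_1,\theta_2)\mapsto (\theta_1,\theta_2,|\nu|)$ in Section \ref{hs} is precisely calibrated so that Chabauty-convergence matches convergence of the bent coordinates, with limits onto $M_0$ when $a\to 1$ away from the diagonal, onto $\{\Id\}$ when $a\to\infty$, and onto $\PS$ when $\Delta_n\to 0$ with $|\nu|$ converging. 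For the gluing, the last proposition of Section \ref{pcase} shows that parabolic-generator subgroups cannot be Chabauty-limits of elliptic/hyperbolic-generator subgroups of different ``type,'' so $\ES \cap \HS = \PS$ as sets; the explicit matrix formula for $P_{\theta,\rho}$ appearing as the parabolic limit in both Propositions \ref{elllimchab} and \ref{hyplimchab} forces the gluing map to be $\Phi(e^{i\theta},\rho) = (\theta,\theta,\rho)$.

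The main obstacle is verifying continuity of the proposed parametrization at the boundary strata where one passes between types, particularly the triple-limit regime where $|p_n|\to 1$, $\omega_n \to \infty$, and $|\mu_n|\to \rho_\infty$ simultaneously (and its hyperbolic analogue). This is exactly where naive pictures fail, and it is exactly what Section \ref{limchab} was built to handle via the Reduction Lemma applied to the families $\varphi_n$ of matrix-valued maps. I would therefore reduce continuity in the bent picture to the fact, already established in the proofs of Propositions \ref{elllimchab} and \ref{hyplimchab}, that the Chabauty limit of $\SSS_n = \varphi_n(\EE_n)$ (resp.\ $\varphi_n(\FF_n)$) depends continuously on $(p_\infty, \omega_\infty, \rho_\infty)$ (resp.\ $((\theta_1)_\infty, (\theta_2)_\infty, a_\infty, \rho_\infty)$). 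Once continuity is granted, injectivity is direct (distinct bent coordinates give subgroups with distinct generators or distinct orbit structures, readable off the matrix), surjectivity is the definition of closure, and the statement of Theorem \ref{mainthm} is assembled by combining (1), (2) and (3).
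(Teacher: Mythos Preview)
Your proposal is correct and follows essentially the same approach as the paper: the Main Theorem is not given a separate formal proof there either, but is stated as the culmination of the matrix representations (Section~\ref{matrix}), the Reduction Lemma, the limit computations in Propositions~\ref{elllimchab} and~\ref{hyplimchab} and Section~\ref{pcase}, and the bent parametrizations of Sections~\ref{es}--\ref{glu}. Your outline is a faithful and slightly more explicit reconstruction of how those pieces assemble; in particular, your identification of the ``triple-limit regime'' as the critical case handled by the Reduction Lemma matches exactly the paper's emphasis.
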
 

It seems worth to mention some easy corollaries of this theorem which tell us about the topology of $\CS$. Some of them may not be of any special interest in the perspective of the geometric limit of Kleinian groups, but could be interesting in purely topological point of view. This is to be compared with the 1-connectivity of the Chabauty space of $\RR^n$ in \cite{Benoit}.

\begin{cor}
\label{1conn} 
$\CS$ is simply-connected.
\end{cor}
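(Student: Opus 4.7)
The plan is a two-stage Van Kampen argument. For the first stage, decompose $\CS = \ES \cup \HS$ with $\ES \cap \HS = \PS$. By Theorem \ref{mainthm}, $\HS$ is a cone over a closed Möbius band and $\PS$ is a cone over $S^1$, so both are contractible. Choose open neighborhoods $U \supset \HS$ and $V \supset \ES$ in $\CS$ that deformation retract to $\HS$ and $\ES$ respectively, with $U \cap V$ deformation retracting to $\PS$; such neighborhoods exist thanks to the explicit $\RR^3$-model constructed in Sections \ref{es}, \ref{hs}, and \ref{glu}. Then Van Kampen yields
\[
\pi_1(\CS) \;=\; \pi_1(U) *_{\pi_1(U \cap V)} \pi_1(V) \;=\; 1 *_{1} \pi_1(\ES) \;=\; \pi_1(\ES),
\]
reducing the problem to showing that $\ES$ is simply-connected.

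For the second stage, recall from Theorem \ref{mainthm}(1) that $\ES$ is the wedge at the identity of the 2-spheres $D_n / \partial D_n$ for $n \ge 2$ together with a ``limit sphere'' $D_\infty \cup \PS$, which is topologically $S^2$ since $D_\infty$ is a closed disk and $\PS$ is a cone over its boundary $\partial D_\infty$. Moreover the spheres $D_n/\partial D_n$ accumulate (in the Hausdorff sense) to this limit sphere. Given any loop $\gamma: S^1 \to \ES$, its image $K$ is compact; the accumulation $D_n \to D_\infty \cup \PS$ implies that for any open neighborhood $N$ of the limit sphere in $\ES$ one has $D_n/\partial D_n \subset N$ for all but finitely many $n$. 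Therefore $K$ is contained in $N \cup \bigcup_{2 \le n < N_0}(D_n/\partial D_n)$ for some $N_0$. Apply Van Kampen once more to this union with the open cover $\{U', V'\}$ where $U'$ is a thickening of $N$ deformation retracting onto the limit $S^2$, and $V'$ is a thickening of the finite wedge $\bigvee_{2 \le n < N_0}(D_n/\partial D_n)$; both are simply-connected (the second by iterated Van Kampen for a finite wedge of $S^2$'s), and their intersection is a path-connected neighborhood of the identity. It follows that $\gamma$ is null-homotopic in $\ES$, so $\pi_1(\ES) = 1$ and hence $\pi_1(\CS) = 1$.

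The main obstacle will be verifying the open-cover conditions carefully in both Van Kampen stages: namely, that the accumulation topology on $\CS$ actually permits choosing neighborhoods with the claimed homotopy types (good pairs $(\HS,\PS)$ inside $\CS$, and an isolating neighborhood of the limit sphere inside $\ES$). The key geometric fact which rules out Hawaiian-earring-type pathologies is that the accumulating cells of $\ES$ are $2$-dimensional and simply-connected; consequently any loop in $\ES$ can be effectively confined to a finite sub-wedge up to homotopy, and the infinite accumulation contributes no nontrivial $\pi_1$ classes.
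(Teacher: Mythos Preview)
Your argument is essentially correct, but it takes a substantially different route from the paper's proof, which is a single sentence: since $\HS$ is contractible, collapsing it shows that $\CS$ is homotopy equivalent to the wedge of countably many $2$-spheres, and such a wedge is simply-connected. In other words, the paper invokes the homotopy equivalence $\CS \simeq \CS/\HS$ rather than Van Kampen.

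Your two-stage Van Kampen approach reaches the same conclusion and is arguably more honest about one point the paper glosses over: the ``wedge'' of spheres here carries the Chabauty (accumulation) topology, not the CW topology, so one cannot simply quote $\pi_1\bigl(\bigvee S^2\bigr)=1$ off the shelf. Your explicit isolation of a loop inside a neighborhood of the limit sphere together with finitely many $D_n/\partial D_n$, and your remark that the accumulating cells are $2$-dimensional (hence no Hawaiian-earring pathology in $\pi_1$), is exactly the missing justification. On the other hand, the paper's approach has the advantage that it identifies the full homotopy type of $\CS$, which is immediately reused in the next corollary to compute $\pi_2(\CS)\cong H_2(\CS)$ via Hurewicz; your Van Kampen route gives $\pi_1=1$ but does not by itself yield that stronger statement.

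One small correction: you write that ``$D_\infty$ is a closed disk,'' but in the paper $D_\infty$ is an \emph{open} disk; its boundary circle lies in $\PS$. This does not affect your conclusion that $D_\infty \cup \PS \cong S^2$, since $\PS$ supplies the missing boundary circle and the cone vertex. Also, be aware that the deformation retraction of your neighborhood $U'$ onto the limit sphere requires some care, since $U'$ contains infinitely many of the accumulating $D_n/\partial D_n$; it suffices for your purposes that $U'$ be simply-connected rather than that it retract to $S^2$, and that weaker statement follows directly from the $2$-dimensionality observation you already made.
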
 
\begin{proof} Since $\HS$ is contractible, $\CS$ is homotopy equivalent to the wedge sum of countably many 2-spheres. 
\end{proof}

In particular, the path-connectivity of $\CS$ tells us that we can continuously deform any group to the any other group in $\CS$. 

\begin{cor} $\pi_2(\CS) \cong  H^2(\CS) \cong \bigoplus\limits_{i \in \NN} \ZZ$.
\end{cor}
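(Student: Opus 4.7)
The plan is to reduce the statement to a computation on the wedge $\bigvee_{n \ge 2} S^2$, to which $\CS$ is homotopy equivalent by Corollary \ref{1conn} (since $\HS$ is contractible and $\ES / \PS$ deformation retracts onto the wedge of spheres $D_n/\partial D_n$). Every invariant appearing in the statement is a homotopy invariant, so the computation can be carried out on that wedge.

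Next, since $\CS$ is simply-connected by Corollary \ref{1conn}, the Hurewicz theorem supplies an isomorphism $\pi_2(\CS) \cong H_2(\CS;\ZZ)$. This reduces the first assertion $\pi_2(\CS)\cong\bigoplus_{i\in\NN}\ZZ$ to a homology calculation on the wedge. For this, I would equip $\bigvee_{n\ge 2} S^2$ with its standard CW structure (one $0$-cell together with one $2$-cell for each $n\ge 2$), so that the cellular chain complex has trivial differentials; one then reads off
\[
H_2\Bigl(\bigvee_{n\ge 2} S^2\Bigr) \;\cong\; \bigoplus_{n\ge 2} \ZZ,
\]
which gives the $\pi_2$ half of the statement.

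For the cohomology half, I would apply the universal coefficient theorem: since $H_1(\bigvee_{n\ge 2} S^2) = 0$, there is a natural short exact sequence collapsing to $H^2 \cong \operatorname{Hom}(H_2,\ZZ)$; combined with the cellular cochain description (the cochain complex is the dual of the cellular chain complex, again with vanishing differentials in the relevant range), this yields $H^2(\CS)\cong \bigoplus_{n\ge 2}\ZZ$, matching the claim.

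I do not expect any serious obstacle: the work was essentially done in Corollary \ref{1conn}. The only point that deserves a careful check is that the wedge description produced by the explicit parametrization of $\ES$ in Section \ref{es} is a genuine CW wedge (as opposed to a Hawaiian-earring-type space), so that the cellular arguments above really apply. But the explicit picture in Section \ref{es}, together with the fact that the spheres $D_n/\partial D_n$ accumulate onto $D_\infty\cup\PS$ which is absorbed into the contractible part $\HS$, makes this verification routine.
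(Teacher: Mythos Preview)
Your treatment of $\pi_2(\CS)$ matches the paper's: simple connectivity plus Hurewicz reduces the computation to $H_2$ of the wedge $\bigvee_{n\ge 2}S^2$, which is $\bigoplus_{\NN}\ZZ$.

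There is, however, a genuine error in your cohomology step. You write that the universal coefficient theorem gives $H^2\cong\operatorname{Hom}(H_2,\ZZ)$ and that the cellular cochain complex, being dual to the chain complex, yields $H^2\cong\bigoplus_{n\ge 2}\ZZ$. But dualizing a \emph{countably infinite} direct sum produces the direct product:
\[
\operatorname{Hom}\Bigl(\bigoplus_{n\ge 2}\ZZ,\,\ZZ\Bigr)\;\cong\;\prod_{n\ge 2}\ZZ,
\]
and $\prod_{\NN}\ZZ$ is not isomorphic to $\bigoplus_{\NN}\ZZ$ (the former is uncountable). Equivalently, for a CW wedge one has $\widetilde H^k\bigl(\bigvee_\alpha X_\alpha\bigr)\cong\prod_\alpha\widetilde H^k(X_\alpha)$, not a sum. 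So the very method you outline actually gives $H^2(\CS)\cong\prod_{\NN}\ZZ$, which is incompatible with the asserted isomorphism $\pi_2(\CS)\cong H^2(\CS)$. The paper's one-line appeal to Hurewicz elides the same point: Hurewicz identifies $\pi_2$ with $H_2$, not with $H^2$, and for an infinite wedge of $2$-spheres these differ. If one reads the statement with $H_2$ in place of $H^2$, your argument is correct and coincides with the paper's.
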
 
\begin{proof} By Corollary \ref{1conn}, we know $\CS$ is 1-connected. Hence the result follows from the Hurewicz theorem. 
\end{proof} 

\begin{cor} Let $\CS' = \{ \Gamma \in \CS : \Gamma \mbox{ is discrete.}\}$. Then $\CS'$ is still simply-connected. 
\end{cor}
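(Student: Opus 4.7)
The plan is to identify the non-discrete subgroups sitting in $\CS$ and then to run a van Kampen argument reducing $\pi_1(\CS')$ to the fundamental group of a wedge of $2$-spheres, in close analogy with the proof of Corollary~\ref{1conn}. First, I would use Theorem~\ref{mainthm} to pin down $\CS\setminus\CS'$: a subgroup represented by a point of $\CS$ is non-discrete exactly when it is a full one-parameter subgroup of $\PR$, namely the group of all rotations around a fixed interior point of $\DD$ (a point of $D_\infty$), the group of all hyperbolic elements sharing a given axis (a point of $M_0$), or the group of all parabolic elements fixing a given point of $S^1_\infty$ (a point of the common boundary circle $\partial D_\infty = \partial M_0$). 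Every other subgroup in $\CS$ --- the trivial group, the finite cyclic groups $\langle E_{p,2\pi/n}\rangle$, and the infinite cyclic groups generated by a single hyperbolic or single parabolic element --- is manifestly discrete. Hence
\[
\CS' \;=\; \CS \setminus \bigl(D_\infty \cup M_0 \cup \partial M_0\bigr).
\]

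Next I would observe that this removed set meets $\HS$ precisely in the closed M\"obius band $\overline{M}=M_0\cup\partial M_0$ which forms the base of the cone $\HS$, and meets $\PS$ precisely in its base circle $\partial M_0$. Therefore both $\HS\cap\CS'$ and $\PS\cap\CS'$ are ``cone minus base,'' and each strongly deformation retracts along the cone direction onto its apex, which in both cases is the trivial group. Covering $\CS'$ by open thickenings $U \supset \ES\cap\CS'$ and $V \supset \HS\cap\CS'$ whose intersection deformation retracts onto $\PS\cap\CS'$, the van Kampen theorem together with the contractibility of $V$ and $U\cap V$ yields $\pi_1(\CS') \cong \pi_1(\ES\cap\CS')$.

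Finally, $\ES\cap\CS'$ is the wedge $\bigvee_{n\geq 2}(D_n/\partial D_n)$ of countably many $2$-spheres glued at the trivial group, together with the contractible cone $\PS\cap\CS'$ glued to that same apex. Collapsing $\PS\cap\CS'$ produces a (possibly shrinking) wedge of $2$-spheres. Any loop there decomposes into subloops each lying in a single $D_n/\partial D_n$, and since every $2$-sphere is simply-connected, those subloops are individually null-homotopic. The main obstacle, and the place where I expect the argument to demand the most care, is the shrinking-wedge phenomenon caused by the $D_n$ accumulating to $\PS\cap\CS'$ in the topology described in Section~\ref{es}: one must verify that the pointwise nullhomotopies can be assembled into a globally continuous nullhomotopy, i.e.\ that no Hawaiian-earring-type pathology obstructs simple connectivity. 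The saving grace, in contrast to the classical Hawaiian earring, is that each $D_n/\partial D_n$ is itself simply-connected and that the image of $D_n$ in any neighborhood of $\PS\cap\CS'$ shrinks in diameter as $n\to\infty$, so the subloop contractions concentrate near the wedge point and patch continuously. This gives $\pi_1(\ES\cap\CS')=0$, and hence $\pi_1(\CS')=0$.
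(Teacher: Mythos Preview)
The paper states this corollary without proof, evidently regarding it as an immediate consequence of the Main Theorem along the lines of the one-sentence argument for Corollary~\ref{1conn}: after removing the non-discrete groups $D_\infty\cup\overline{M_0}$, the piece $\HS\cap\CS'$ is still a cone minus its base, hence contractible, and one is left with a wedge of $2$-spheres. Your proposal correctly identifies $\CS\setminus\CS'$, carries out essentially this same reduction via van Kampen, and is a valid, more detailed version of the intended argument. In fact you go further than the paper does even for $\CS$ itself, by flagging and resolving the shrinking-wedge subtlety (the $D_n/\partial D_n$ accumulating onto $\PS$); the paper's proofs of Corollary~\ref{1conn} and of the present corollary both silently assume that a wedge of $2$-spheres---possibly shrinking---is simply-connected, and your observation that the diameters of the $D_n$ shrink near the wedge point so that the individual null-homotopies assemble continuously is exactly the missing justification.
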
 

This says the connectivity result of $\CS$ does not depend on the part corresponding to continuous groups. 





\pagebreak

\section{Future work and related topics} 
\label{future}
 We have studied the space of geometric limits of the one-generator closed subgroups of $\PR$. There are two obvious ways to generalize this; 
 one can study either the space of geometric limits of two-generator closed subgroups of $\PR$ or the space of geometric limits of one-generator closed subgroups of $\PC$. 

The former case has some intricate features, but the latter one involves much more diverse phenomena. For instance, subgroups generated by one hyperbolic element 
can converge in the Chabauty topology to a subgroup generated by \emph{two} parabolic elements. 
The authors are currently writing a paper about the space of one-generator closed subgroups of $\PC$. In this upcoming paper, 
we use similar parametrizations, and obtain results comparable to those we obtained in Section \ref{limchab}. But the global picture is emphatically more complicated. 
Relatively easy cases will be explored by understanding the Chabauty topology of $\CC^*$ and applying the Reduction Lemma. 
On the way, we will encounter an interesting space which we call the Hubbard's cabbage. 
Much more is involved for the full generality.

One remote but major goal of this project is to understand the global topology of the space of the type-preserving quasifuchsian representations of the punctured-torus group. For the definitions and detailed theory, we refer the readers to \cite{puncturedtorus}. 
In this case, we are interested in the subgroups of $\PC$ with two hyperbolic generators. Thus the boundary is much more complicated than the one of the space of one-generator subgroups of $\PC$.  
The major complication of the boundary  comes from the enrichment phenomenon, specific to geometric limits of Kleinian groups (see \cite{Hubb2}).
Considering all possible geometric limits in the one-generator case, however, one can have a much better understanding how the enrichment occurs in more explicit terms. 
With a deep understanding on the enrichment of Kleinian groups, one might hope to attack the following conjecture due to Bowditch. See, for instance \cite{Minsky1}. 
\begin{conj} 
In the space of the type-preserving representations of the punctured-torus group, the representations satisfying the $BQ$-condition are precisely the quasifuchsian representations. 
\end{conj}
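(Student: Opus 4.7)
The plan is to establish the conjectured equivalence by proving the two implications separately, exploiting the action of the mapping class group on the space of Markov triples associated to simple closed curves on the once-punctured torus. Recall that every type-preserving representation $\rho$ of the punctured-torus group into $\PC$ is determined up to conjugation by the triple $(x,y,z)=(\mathrm{tr}\,\rho(a),\mathrm{tr}\,\rho(b),\mathrm{tr}\,\rho(ab))$, which must satisfy the Markov equation $x^2+y^2+z^2=xyz$, and simple closed curves correspond to vertices of the Farey tree on which the mapping class group acts by the Vieta jumps $x\mapsto yz-x$. The $BQ$-condition then translates into a quantitative statement about how few traces of vertex-elements lie in the closed disk $\{|t|\leq 2\}\subset \CC$, so the game is entirely about the dynamics of these Vieta jumps.

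For the easier direction, that quasifuchsian implies $BQ$, I would use that a quasifuchsian representation is discrete and faithful, so every nontrivial simple closed curve is sent to a loxodromic element; this rules out any $|\mathrm{tr}|=2$ violation. The finiteness claim, that only finitely many simple closed curves have trace inside $\{|t|\leq 2\}$, would follow from a Margulis-lemma / thick-thin decomposition argument on the convex core of the quotient hyperbolic 3-manifold, whose compactness in the quasifuchsian setting forces only finitely many Margulis tubes around short geodesics.

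The substantive direction is $BQ$ implies quasifuchsian. My plan is to deploy a Fatou-lemma style argument on the Farey tree in the spirit of Bowditch: given the $BQ$-condition, one first shows that traces grow at least exponentially as one moves away from the finite ``bad'' subtree of vertices with $|\mathrm{tr}|\leq 2$, so all but finitely many vertex elements are loxodromic with translation length bounded below. I would then construct a $\rho$-equivariant embedding of the Farey tree into $\HH^3$ by sending each vertex to the axis of its loxodromic element, verify that adjacent axes fellow-travel in a coarsely Fuchsian fashion controlled by the Markov relation, and conclude along the lines of an Efremovich--\v{S}varc argument that $\rho$ is discrete, faithful, and convex-cocompact outside cusp neighborhoods, hence quasifuchsian.

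The main obstacle, and the reason the conjecture remains open, is excluding exotic ``geometrically infinite but $BQ$-satisfying'' representations: even once discreteness and faithfulness have been extracted from the Markov dynamics, one must rule out singly- or doubly-degenerate ends whose ending laminations could interact unpredictably with simple closed curves. Converting the combinatorial finiteness in $BQ$ into an \emph{effective} lower bound on the injectivity radius throughout the convex core is the crux; any honest attempt would almost certainly need to marry the Farey-tree bookkeeping above with Minsky's model-manifold machinery for the punctured torus, and it is in this synthesis that the bulk of the technical work would have to be done.
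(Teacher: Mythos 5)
This statement is a \emph{conjecture} (due to Bowditch), which the paper explicitly presents as open and offers no proof of; it appears in the ``Future work'' section precisely because nobody knows how to prove it. Your proposal is therefore being compared against nothing, and on its own terms it is a research program rather than a proof. The first direction (quasifuchsian $\Rightarrow$ $BQ$) is indeed known and is essentially argued the way you sketch it --- discreteness and faithfulness force all primitive elements to be loxodromic, and compactness of the convex core bounds the number of short geodesics --- so that half is fine, though it is already a theorem of Bowditch rather than something needing a new argument.

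The gap is in the converse, and you have in fact named it yourself: the passage from the combinatorial finiteness of $BQ$ to discreteness and tameness of $\rho$ is exactly the open problem, and the steps you propose do not close it. Concretely, the ``Efremovich--\v{S}varc''-style conclusion that fellow-traveling axes over the Farey tree yield a convex-cocompact (away from the cusp) action presupposes discreteness, which is not known to follow from the trace growth estimates; Bowditch's Fatou-type analysis of the Markov/Vieta dynamics gives exponential growth of traces away from a finite subtree, but this controls only the elements corresponding to simple closed curves, a sparse subset of the group, and says nothing directly about accumulation of $\rho(\Gamma)$ at the identity. Moreover, even granting discreteness and faithfulness, one must exclude degenerate (geometrically infinite) ends, and no known mechanism converts the $BQ$ finiteness into a positive lower bound on injectivity radius throughout the convex core --- your appeal to Minsky's model manifold is the right place to look, but the synthesis you describe is precisely the unsolved technical content of the conjecture, not a step you can take. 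A correct review of your own text would be: everything before the last paragraph is a plausible reduction, and the last paragraph concedes that the reduction does not terminate in a proof.
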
 

Consider the trace of the commutator of the generators in the character variety $\chi(F_2)$. This defines an Out($F_2$)-invariant function $\varphi$ on $\chi(F_2)$. Then the level set $\varphi^{-1}(-2)$ is the slice consists of the type-preserving representations of the punctured-torus groups (here, `type-preserving' means the commutator of the generators is parabolic).  One more term needed to be defined here is the $BQ$-condition.  
\begin{defn}
$[\rho] \in \chi(F_n)$ is said to satisfy the BQ-condition if 
\begin{itemize}
\item[(1)] $\rho(X)$ is hyperbolic for all primitive element $x \in F_2$.
\item[(2)] The number of conjugacy classes of primitive elements $x$ such that $|tr(\rho(x))| \le 2$ is finite. 
\end{itemize} 
\end{defn} 

We would like to know the global topology of this space, namely describe how the boundary looks like explicitly. We hope that the techniques we have been developed are potentially useful.


\bibliographystyle{alpha}
\bibliography{biblio}

\end{document}